\newtheorem{prop}{Proposition}
\newtheorem{theorem}{Theorem}
\tikzset{
  treenode/.style = {shape=rectangle, rounded corners,
                     draw, align=center,
                     top color=white, bottom color=blue!20},
  root/.style     = {treenode, font=\Large, bottom color=red!30},
  env/.style      = {treenode, font=\ttfamily\normalsize},
  dummy/.style    = {circle,draw}
}
\DeclareMathOperator*{\argmin}{arg\,min}
\newtheorem{remark}{Remark}[section]
\newcommand{\R}{\mathbb{R}}
\begin{document}


\title{Statistical Proper Orthogonal Decomposition for model reduction in feedback control}

\author[1]{Sergey Dolgov}
\author[2]{Dante Kalise}
\author[3]{Luca Saluzzi}
\affil[1]{Department of Mathematical Sciences, University of Bath, United Kingdom. 
	
	\texttt{e-mail:s.dolgov@bath.ac.uk}}
\affil[2]{Department of Mathematics, Imperial College London, United Kingdom. \texttt{e-mail:dkaliseb@ic.ac.uk}}
\affil[3]{Department of Mathematics, Scuola Normale Superiore, Pisa, Italy. \texttt{e-mail:luca.saluzzi@sns.it}}

\maketitle
\begin{abstract}
Feedback control synthesis for nonlinear, parameter-dependent fluid flow control problems is considered. The optimal feedback law requires the solution of the Hamilton-Jacobi-Bellman (HJB) PDE suffering the curse of dimensionality. This is mitigated by Model Order Reduction (MOR) techniques, where the system is projected onto a lower-dimensional subspace, over which the feedback synthesis becomes feasible.
However, existing MOR methods assume at least one relaxation of generality, that is, the system should be linear, or stable, or deterministic. 

We propose a MOR method called Statistical POD (SPOD), which is inspired by the Proper Orthogonal Decomposition (POD), but extends to more general systems. Random samples of the original dynamical system are drawn, treating time and initial condition as random variables similarly to possible parameters in the model, and employing a stabilizing closed-loop control. The reduced subspace is chosen to minimize the empirical risk, which is shown to estimate the expected risk of the MOR solution with respect to the distribution of all possible outcomes of the controlled system.
This reduced model is then used to compute a surrogate of the feedback control function in the Tensor Train (TT) format that is computationally fast to evaluate online.
Using unstable Burgers' and Navier-Stokes equations, it is shown that the SPOD control is more accurate than Linear Quadratic Regulator or optimal control derived from a model reduced onto the standard POD basis, and faster than the direct optimal control of the original system.
\end{abstract}

\section{Introduction}
\label{sec:intro}
Nonlinear dynamics arising in fluid flow play a vital role in numerous industrial applications, ranging from aerospace engineering to chemical processing. Efficient management and control of these systems under random state fluctuations or different parameters is crucial to ensure optimal performance, stability, and safety. 
One key approach to achieving these objectives is feedback control. This involves continuous monitoring of the system's behavior and applying corrective actions to minimize deviations from a reference state. In the context of fluid flow problems, feedback control enables engineers to actively manipulate various system parameters such as flow rates, pressures, temperatures, and concentrations to stabilize the dynamics.
In the case of linear dynamics and quadratic cost functions, a feedback controller that is relatively cheap computationally is the Linear Quadratic Regulator (LQR), which reduces to the solution of a matrix Algebraic Riccati equation \cite{Kirsten_Simoncini_2020,BBKS_2020}.
However, the complexity and nonlinearity inherent in fluid dynamics pose significant challenges in the design and implementation of feedback control strategies. Unlike other fields where linear models and simplified assumptions may be sufficient, fluid systems often exhibit intricate behaviors, including turbulence, flow separation, and time-dependent phenomena. Consequently, developing effective feedback control techniques for fluid problems turns out to be a very challenging task. 

Potentially, the optimal feedback control can be computed by solving the Hamilton-Jacobi-Bellman (HJB) nonlinear partial differential equation (PDE). 
However, the HJB equation is usually of hyperbolic nature, and posed on the state space of the dynamical system, which can be very high-dimensional. 
The latter is typical when the system arises from a discretization of a PDE, such as the Navier-Stokes equation in fluid dynamics.
Moderately high-dimensional HJB equations have been tackled with various methods:
max-plus algebra \cite{Akian_Gaubert_Lakhoua_2008,maxplusdarbon}, sparse grids and polynomials \cite{GK16,AKK21,KVW23}, tree-structure algorithms \cite{alla2019high,falcone2023approximation}, deep neural networks \cite{Han_Jentzen_E_2018,Darbon_Langlois_Meng_2020,Kunisch_Walter_2021,sympocnet,Zhou_2021,Onken2021,ruthotto2020machine,ABK21}, low-rank tensor decompositions \cite{sallandphd,richter2021solving,DKK21} and kernel interpolation techniques \cite{alla2021hjb,ehring2023hermite}.
However, direct solution of the HJB equation for thousands of dimensions (which is typical in Finite Element discretizations) remains out of reach.

The PDE formulation of the HJB equation can be 
bypassed by a data-driven approach, where one computes the value function and/or control at each of the given (usually random) states in a closed-loop fashion, and finds an approximate feedback control in some ansatz by solving a regression problem by minimizing an empirical risk of given samples of the control.
Such regression in a sparse polynomial basis was considered e.g. in 
\cite{Kang_Wilcox_2017,Kang_Wilcox_2015,Azmi:2021}, a tensor format regression was employed in \cite{oss-hjbt-2021,sallandphd,dolgov2022data}, and a neural network was trained in \cite{Nakamura_Zimmerer_2021,Nakamura_Zimmerer_2021b,bensnn,onken2021neural}.
However, the dimension of the sought control function is still that of the state space, requiring a sheer number of unknowns in the approximation ansatz and training data.

Fortunately, not all states of the system are important or reachable, especially in the controlled regime.
Therefore, a promising approach to tackle the curse of dimensionality is the Model Order Reduction (MOR).
Here, the dynamical system is projected onto some precomputed basis of lower dimension, and the HJB equation (or a data-driven variant thereof) is solved for the reduced system.
The key question now is the algorithm to compute such a basis.

One of the simplest options is called Proper Orthogonal Decomposition (POD). This method collects system states ("snapshots") at certain time points, and finds a basis that minimizes the total projection error of these snapshots onto the basis.
In the context of feedback control, the POD method was used to deliver a reduced system (and hence a reduced HJB equation) in \cite{kunisch2004hjb,kunisch2005pod}.
However, the snapshots produced from an uncontrolled deterministic system may be inaccurate, and even misleading for the controlled system at a different realisation of random parameters or initial conditions.
For instance, it may be simply impossible to collect snapshots from an uncontrolled system that exhibits a finite-time blowup.

A state of the art method to identify a basis that approximates well only controllable and observable states is balanced truncation (BT) \cite{Moore-BT-1981}.
Note that the original formulation of BT still uses the uncontrolled system, and thus requires its stability, as well as linearity.
This problem was mitigated by the closed-loop balanced truncation \cite{Wortelboer} that incorporates the LQR, stabilizing the system matrix.
The linearity assumption aside, the reduction of the closed-loop instead of uncontrolled system will be the first ingredient of our paper.

Since BT uses the linear structure of the dynamical system quite explicitly, its generalization to nonlinear systems is difficult, and still limited to relatively simple cases.
For example, a BT for bilinear systems was introduced in \cite{BenD11},
lifting of some nonlinear systems to quadratic-bilinear systems of larger dimension was proposed in \cite{Kramer-lifted-BT-2022},
and an $\mathcal{H}_{\infty}$-balanced truncation for differential-algebraic systems, linearizing the system around one state (the stationary solution), was presented in \cite{Heiland-Hinf-BT-2022}. A more general approach to nonlinear BT has been recently developed in \cite{kramer2022nonlinear,kramer2023nonlinear} however, the nonlinear reduction also requires the solution of high-dimensional HJB type equations, which is precisely the computational limitation we try to avoid in this paper. As we will see in numerical examples, linearization around one particular state may be inaccurate for more general nonlinearities and random parameters, such as the Navier-Stokes equation with random inflow and boundary control.

Random parameters (such as random coefficients or initial conditions) are particularly difficult but relevant scenarios motivating the feedback control, which needs to be robust with respect to this uncertainty.
If the random variables enter the model (bi)linearly, they can also be lifted to apply e.g. bilinear BT \cite{Redmann-BT-stoch-2015}.
Existence of a quadratic-bilinear structure suitable for BT is not clear in general.
On the other hand, POD methods require only that random realizations of the model can be sampled: in this case, the minimization of the projection error in POD can be straightforwardly extended to the minimization of an empirical risk calculated using training samples of all random variables in the model. A deep-learning/POD approach for parameter-dependent nonlinear PDEs has been developed in \cite{FDM21}.

Empirical balanced truncation using snapshots to approximate the Gramians emerged soon after the first BT papers \cite{Lall-snapshot-BT-1999}, and many more followed.
Similarly to \cite{Lall-snapshot-BT-1999}, \cite{rowley-balanced-pod-2005} proposes a balanced POD to compute the Gramians from numerical simulations of state responses to unit impulses.
Using similar ideas, \cite{Willcox-BT-POD-2002} shows that the POD kernel is an approximation to the controllability Gramian.
This is an important observation for this paper: a feedback control applies usually to the fully observed state, in other terms the observation matrix is the identity, which drops the main benefit of BT compared to POD: the extra truncation of poorly observable states.

Thus, we propose a MOR method that resembles the POD to a large extent, except that the snapshots are collected at random samples of time, parameters and initial conditions.
In contrast to particularly designed inputs (such as unit impulses), this statistical approach makes the empirical risk a convergent estimate of the expected risk.
This method is motivated by Likelihood Informed Subspace \cite{MCMC:CLM_2016} and Active Subspace \cite{DimRedu:CDW_2014,zahm2020gradient} methods in statistics, where the reduced basis is derived from an empirical mean Hessian of the log-likelihood, or Gramian of the forward model.
Here we propose a similar approach in the feedback control context.
Stabilization is achieved by collecting the snapshots from a closed-loop system similarly to \cite{Wortelboer}, but controlled with any feasible (possibly suboptimal) regulator, such as the Pontryagin Maximum Principle \cite{Kirk2004}, or State-Dependent Riccati Equation \cite{Banks_Lewis_Tran_2007}.
The proposed method is close to \cite{Yousefi-nlBT-2004} which linearizes the system by estimating system matrices from snapshots, followed by balanced truncation,
and \cite{Olshanskii-TT-POD-2023} where the POD is extended to a low-rank tensor approximation of the full solution as a function of state, parameters and time, followed by recompression of the basis at the given parameter.
However, in our statistical POD method we neither assume nor seek any linearization, and in contrast to \cite{Olshanskii-TT-POD-2023} we do this in the optimal control setting.

Moreover, as soon as the statistical POD basis is ready, we pre-compute a low-rank tensor approximation (specifically, the Functional Tensor Train (TT) format \cite{osel-tt-2011,Marzouk-stt-2016,Gorodetsky-ctt-2019}) of the closed-loop control of the reduced model as a function of the reduced state.
This TT approximation provides the desired control in the feedback form that is fast to evaluate numerically, since it needs only a modest amount of linear algebra operations with the TT decomposition, in contrast to solving an optimal control problem from scratch.
This enables fast synthesis of a nearly-optimal control in the online regime.
A schematic of the entire procedure is shown in Figure~\ref{fig:workflow}.

\begin{figure}[h!]
\centering
\begin{tikzpicture}
\node[draw=black,rounded corners,text width=0.19\linewidth] (samples) {Sample initial condition and parameters};
\node[draw=black,rounded corners,right=0.08\linewidth of samples,text width=0.23\linewidth] (ode)  {Solve closed-loop full systems};
\node[draw=black,rounded corners,right=0.08\linewidth  of ode,text width=0.22\linewidth] (svd) {trunc. SVD of snapshot matrix};
\node[draw=black,rounded corners,below=0.1\linewidth of svd,text width=0.23\linewidth,minimum height=0.1\linewidth] (redsamples) {Solve closed-loop reduced systems};
\node[draw=black,rounded corners,left=0.20\linewidth of redsamples,text width=0.20\linewidth,minimum height=0.1\linewidth] (cross) {TT-Cross};
\node[left=0.1\linewidth of cross] (offline) {\textbf{offline}};
\node[draw=black,rounded corners,below=0.15\linewidth of redsamples,text width=0.12\linewidth] (fullon) {Online system};
\node[draw=black,rounded corners,left=0.3\linewidth of fullon,text width=0.17\linewidth] (interp) {Interpolate TT control};
\node[below=0.20\linewidth of offline] (online) {\textbf{online}};

\draw[->,line width=1pt] ($(samples.east)+(0,0.02\linewidth)$) -- ($(ode.west)+(0,0.02\linewidth)$);
\draw[->,line width=1pt] ($(samples.east)-(0,0.02\linewidth)$) -- ($(ode.west)-(0,0.02\linewidth)$);
\draw[->,line width=1pt] (samples.east) -- (ode.west);
\draw[->,line width=3pt] (ode.east) -- (svd.west);
\draw[->,dashed,line width=2pt] (svd.south east) to[out=-45,in=45] node[midway,left] {basis} (redsamples.north east);
\draw[->,line width=1pt] ($(cross.east)+(0,0.02\linewidth)$) to[out=45,in=135,looseness=1] node[above,midway] {{\footnotesize state samples}} ($(redsamples.west)+(0,0.02\linewidth)$);
\draw[->,line width=1pt] ($(redsamples.west)-(0,0.02\linewidth)$) to[out=-135,in=-45,looseness=1] node[below,midway] {{\footnotesize control samples}} ($(cross.east)-(0,0.02\linewidth)$);

\node[circle,draw=black,inner sep=2pt,text width=0.08\linewidth] (project) at ($0.5*(fullon.north west)+0.5*(interp.north east)+(0,0.02\linewidth)$) {\footnotesize project};

\draw[->,line width=1pt] ($(fullon.west)+(0,0.02\linewidth)$) to[out=135,in=0,looseness=1] node[below,midway,rotate=-25] {{\footnotesize state}} (project.east);
\draw[->,line width=1pt] (project.west) to[out=180,in=45,looseness=1] node[below,midway,rotate=25] {{\footnotesize $\substack{\mbox{red.}\\\mbox{state}}$}} ($(interp.east)+(0,0.02\linewidth)$);
\draw[->,line width=1pt] ($(interp.east)-(0,0.02\linewidth)$) to[out=-45,in=-135,looseness=1] node[above,midway] {{\footnotesize control}} ($(fullon.west)-(0,0.02\linewidth)$);

\draw[->,dashed,line width=2pt] (svd.south east) to[out=-45,in=10,looseness=1.7]  (project.north east);
\draw[->,dashed,line width=2pt] (cross.south west) to[out=-135,in=135] node [midway,right] {TT format} (interp.north west);
\end{tikzpicture}
\caption{Workflow of the statistical POD and TT controller. In the offline stage, we draw (a modest number of) random samples of the initial condition and parameters, solve the full closed-loop systems on those parameters, and collect all snapshots into one matrix. Computing the truncated SVD of this matrix delivers a reduced basis minimizing the empirical risk. The TT-Cross algorithm is run to approximate the optimal control of the reduced system in the feedback form in the TT format. The TT-Cross draws adaptive samples of the reduced state, and assimilates the corresponding control values. In the online stage, the system state is measured and projected onto the reduced basis. The TT format of the control is interpolated on this reduced state and fed back into the system.}
\label{fig:workflow}
\end{figure}
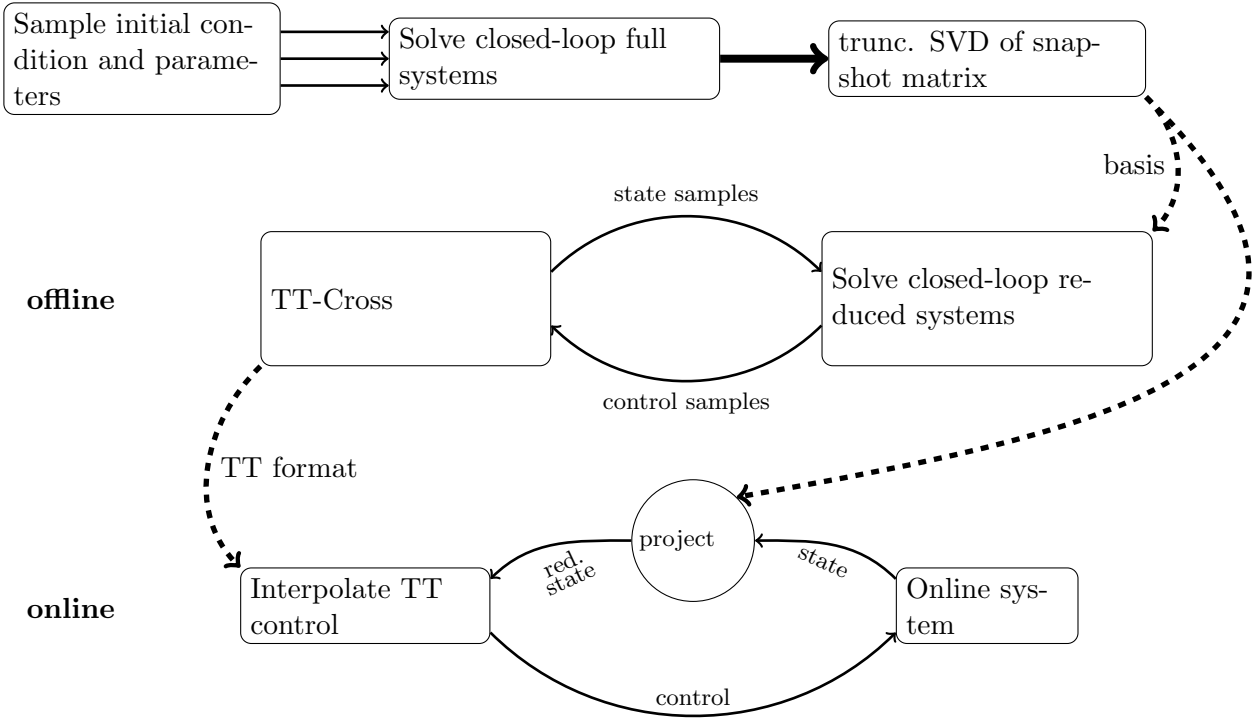
 
The rest of the paper is organized as follows. In Section 2 we introduce relevant background material on optimal control and tensor train approximation. In Section 3 we introduce our statistical POD approach, and present related error estimates in Section 4. The proposed methodology is first assessed in Section 5 using Burgers' equation, to conclude with a full application to fluid flow control in Navier-Stokes in Section 6.
\section{Background}
\subsection{Parameter-dependent optimal control problems}
We are interested in a class of parameter-dependent optimal control problems where a parameter $\mu\in \R^{M}$ accounts for uncertainties in initial/boundary conditions of the control system propagating along state and control trajectories. 
To introduce $\mu$ we assume a finite dimensional noise. 
Namely, given a complete probability space $(\Omega, \mathcal{F}, \mathbb{P})$ for random realisations of the system $\omega\in\Omega$, the $\sigma$-algebra of events $\mathcal{F}$ and the probability measure $\mathbb{P}$, 
we assume that any random variable $X(\omega)$ can be expressed as a deterministic function $X(\omega)=x(\mu(\omega))$ of the random vector $\mu(\omega)$ with a product density function $\pi(\mu) = \pi(\mu_1) \cdots \pi(\mu_M),$ and expectations with respect to $\mathbb{P}$ can be computed as integrals weighted with $\pi$,
 $
 \mathbb{E}[X] = \int_{\mathbb{R}^M} x(\mu) \pi(\mu) d\mu.
 $
Now the system dynamics can be described as a parameter-dependent controlled evolution equation:
\begin{equation}\label{eq}
\left\{ \begin{array}{l}
\frac{d}{dt}y(t;\mu)=f(y(t;\mu),u(t;\mu),\mu), \;\; t\in(0,T],\, \mu\in \R^M,\\
y(0;\mu)=x(\mu) \in\mathbb{R}^d,  
\end{array} \right.
\end{equation}
where we denote by $y:[0,T] \times \R^M\rightarrow\R^d$ the state of the system, by $u:[0,T] \times \R^M \rightarrow\R^m$ the control signal, and by $\mathcal{U}=L^\infty ([0,T] \times \R^M;U_{ad}), U_{ad}\subseteq \R^m$, the set of admissible controls. 

Given the control system \eqref{eq}, we are concerned with the synthesis of a feedback control law $u(y(t;\mu);\mu)$, that is, a control law that primarily depends on the current state of the system $y(t;\mu)$. We design such a control law by minimizing a finite horizon cost functional of the form
\begin{equation}\label{cost_finite}
 J_T(u;x,\mu):=\int_0^{T} L(y(t;\mu),u(t;\mu))\,dt \,, \; \mu \in \R^M, u \in \mathcal{U},
\end{equation}
where $L:\mathbb{R}^d \times \mathbb{R}^m \rightarrow \mathbb{R}$ is a suitable running cost. Note that the dependence on the cost is with respect to $x$, the initial condition, which generates the trajectory $y(t;\mu)$  For a given parameter realization $\mu$ and initial condition $x(\mu)$, the optimal control signal is given by the solution of 
\begin{equation}
    \underset{u(\cdot)\in\mathcal{U}}{\inf} J_T(u;x,\mu)\,,\qquad\text{subject to \eqref{eq}}\,.
    \label{VF_finite}
\end{equation}

For a fixed $\mu$, we obtain deterministic dynamics, for which optimality conditions for the dynamic optimization problem \eqref{VF_finite} are given by Pontryiagin's Maximum Principle (PMP)
\begin{align}\label{pmp}
\frac{d}{dt}y(t;\mu)&=f(y(;\mu),u(t;\mu),\mu), \\
y(0;\mu)&=x(\mu),\\
-\frac{d}{dt} p(t;\mu)&=  \nabla_{y} f(y(t;\mu),u(t;\mu),\mu)^\top p(t;\mu)+\nabla_{y}L(y(t;\mu),u(t;\mu)), \\
p(T;\mu)&=0 , \\
u(t;\mu)&=\underset{w \in U_{ad}}{\argmin}  \{L(y(t;\mu),w) +f(y(t;\mu),w,\mu)^{\top} p(t;\mu)  \}\,,
\label{pmp2}
\end{align}
where  $p:[0,T] \times \R^M\rightarrow\R^d$ denotes the adjoint variable. The PMP system can be solved using a reduced gradient method (see $e.g.$ \cite{AKK21} for more details), which requires an initial guess for the optimal control signal $u(t;\mu)$. In Section \ref{eff_snap}, we will explain how the construction of reduced basis can improve the choice for the initial guess and accelerate the entire algorithm.

\subsection{Sub-optimal control laws using the State-Dependent Riccati Equation}
\label{sec:sdre}

The State-Dependent Riccati Equation (SDRE) is an effective technique for feedback stabilization of nonlinear dynamics \cite{ccimen2008state,allasdre}. The method is based on the sequential solution of linear-quadratic control problems arising from sequential linearization of the dynamics along a trajectory. 

Given a realization of $\mu$, we consider an unconstrained ($U_{ad}=\R^m$) infinite-horizon quadratic cost functional
\begin{equation}
    J_{\infty}(u;x,\mu) = \int\limits_0^{+\infty} y(t;\mu)^\top Q y(t;\mu) + u(t)^\top R u(t)\, dt \,,
\end{equation}
with $Q\in\R^{d\times d}, Q\succeq 0\,,\; R\in\R^{m\times m}, R\succ 0\,,$ and system dynamics expressed in semilinear form
\begin{align}
    \frac{d}{dt}y(t;\mu) & = A(y(t;\mu);\mu) y(t;\mu) +B(y(t;\mu);\mu) u(t) \\
    y(0;\mu) & = x(\mu)\,.
\end{align}
In the the case of linear dynamics in the state, $A(y(t;\mu);\mu) =A(\mu)\in\R^{d\times d}$, and $B(y(t;\mu);\mu)=B(\mu)\in\R^{d\times m}$, under standard stabilizability assumptions, the optimal control is expressed in feedback form 

\begin{equation}
   u(y;\mu) = -R^{-1} B^{\top}(\mu) P(\mu)y\,,
\end{equation}
where $P(\mu)\in\R^{d\times d}$ is the unique positive definite solution of the Algebraic Riccati Equation (ARE)
\begin{align}
A^\top(\mu) P(\mu) + P(\mu) A(\mu)
  -P(\mu)B(\mu)R^{-1}B^\top(\mu)P(\mu) +Q = 0\,.
    \label{are}
\end{align}
In this case, the feedback operator $K=R^{-1} B^{\top}(\mu) P(\mu)$ does not depend on the state. Formally, the SDRE method extends this logic to semilinear systems by indexing with respect to the current state of the trajectory, that is,
\begin{equation}
   u(y;\mu) = -R^{-1} B^{\top}(y;\mu) P(y;\mu)y\,,
    \label{control_sdre_feed}
\end{equation}
where $P(y;\mu)$ is the solution of the ARE
\begin{align}
A^\top(y;\mu) P(y;\mu) + P(y;\mu) A(y;\mu)-P(y;\mu)B(y;\mu)R^{-1}B^\top(y;\mu)P(y;\mu) & = -Q
    \label{sdre}
\end{align}
where $A(y;\mu)$ and $B(y;\mu)$ are frozen at $y$.
The implementation of the SDRE control requires the sequential solution of eq. \eqref{sdre} as the state $y(t;\mu)$ evolves in time. The SDRE feedback loop satisfies necessary optimality conditions at a quadratic rate as the state is driven to zero (we refer to \cite{ccimen2008state} for more details and a complete statement of the result).

\subsection{Tensor Train surrogate model for the control}
\label{sec:TT}
Neither of the two previously presented control laws can be effectively implemented for real-time stabilization of large-scale dynamics. Both the numerical realization of the PMP optimality system and the computational burden of solving AREs along a trajectory in the SDRE method surpass the time scale required for real-time control. Hence, we propose to construct a surrogate feedback law from state-parameter samples in a supervised learning manner. As a surrogate ansatz we use the Tensor Train (TT) decomposition, which can then be evaluated in real time along a controlled trajectory.

A scalar function $\tilde u(x): \mathbb{R}^d \rightarrow \mathbb{R}$ with $x=(x_1,\ldots,x_d) \in \mathbb{R}^d$ is said to be represented in the Functional Tensor Train (FTT) decomposition \cite{Marzouk-stt-2016,Gorodetsky-ctt-2019} if it can be written as
\begin{equation}\label{eq:ftt}
\tilde u(x) = \sum_{\alpha_0,\ldots,\alpha_{d}=1}^{r_0,\ldots,r_{d}} u^{(1)}_{\alpha_0,\alpha_1}(x_1) u^{(2)}_{\alpha_1,\alpha_2}(x_2) \cdots u^{(d)}_{\alpha_{d-1},\alpha_d}(x_d),
\end{equation}
with some \emph{cores} $u^{(k)}(x_k): \mathbb{R} \rightarrow \mathbb{R}^{r_{k-1} \times r_k}$, $k=1,\ldots,d$, and \emph{ranks} $r_0,\ldots,r_d$.
Without loss of generality, we can let $r_0=r_d=1$, but the intermediate ranks can be larger than $1$ to account for the structure of the function.
In practice, the exact decomposition \eqref{eq:ftt} may be not possible, and we seek to approximate a given control function $u(x)$ by $\tilde u(x)$ in the form \eqref{eq:ftt}, minimising the error $\|u - \tilde u\|$ in some (usually Euclidean) norm.
We aim at scenarios where $r_k$ stay bounded, or scale mildly.
Quadratic value functions \cite{DKK21} or weakly correlated Gaussian functions \cite{rdgs-tt-gauss-2022}, for example, admit FTT approximations with ranks at most polynomial in $d$ and poly-logarithmic in the approximation error $\|u - \tilde u\|$.
This allows one to avoid the curse of dimensionality by replacing $u(x)$ by its approximation \eqref{eq:ftt}.

The FTT \eqref{eq:ftt} has originally emerged as the TT decomposition \cite{osel-tt-2011} of tensors (multiindex arrays).
Indeed, the two come together.
For practical computations with \eqref{eq:ftt} one needs to discretise each core using some basis functions $\phi^{(k)}_1(x_k),\ldots,\phi^{(k)}_{n_k}(x_k)$ in the $k$-th variable.
One can now write the $k$-th core using a 3-dimensional tensor $\mathcal{U}^{(k)}\in\mathbb{R}^{r_{k-1} \times n_k \times r_k}$ of expansion coefficients,
\begin{equation}\label{eq:fttcore}
u^{(k)}_{\alpha_{k-1},\alpha_k}(x_k) = \sum_{i=1}^{n_k} \mathcal{U}^{(k)}(\alpha_{k-1},i,\alpha_k) \phi_i^{(k)}(x_k).
\end{equation}
The multivariate function $\tilde u(x)$ becomes discretised in the Cartesian basis, and defined by a $d$-dimensional tensor of expansion coefficients,
\begin{align}
\tilde u(x) & = \sum_{i_1,\ldots,i_d=1}^{n_1,\ldots,n_d} \mathcal{U}(i_1,\ldots,i_d) \phi^{(1)}_{i_1}(x_1) \cdots \phi^{(d)}_{i_d}(x_d), \nonumber \\
\mathcal{U}(i_1,\ldots,i_d) & = \sum_{\alpha_0,\ldots,\alpha_{d}=1}^{r_0,\ldots,r_{d}} \mathcal{U}^{(1)}(\alpha_0,i_1,\alpha_1) \cdots \mathcal{U}^{(d)}(\alpha_{d-1},i_d,\alpha_d).
\label{eq:discr-tt}
\end{align}
The tensor decomposition \eqref{eq:discr-tt} is the original TT format \cite{osel-tt-2011}.
Note that to interpolate $\tilde u(x)$ on any given $x$ requires first $d$ univariate interpolations of the cores as shown in \eqref{eq:fttcore},
followed by the multiplication of matrices $u^{(1)}(x_1) \cdots u^{(d)}(x_d)$.
This can be implemented in $2\sum_{k=1}^d n_k r_{k-1} r_k = \mathcal{O}(dnr^2)$ operations, where we define upper bounds $n:=\max_k n_k$ and $r:=\max_k r_k$.

To compute a TT approximation, instead of minimising the error $\|u-\tilde u\|$ directly, one can use faster algorithms based on the so-called \emph{cross interpolation} \cite{ot-ttcross-2010,ds-parcross-2020,dolgov2022data}.
These methods consider sampling sets of the Cartesian form $X_{<k} \oplus X_k \oplus X_{>k}$, for each $k=1,\ldots,d,$ where $X_{<k} = \{(x_1,\ldots,x_{k-1})\}$ of cardinality $r_k$, $X_k = \{x_k\}$ of cardinality $n_k$, and $X_{>k} = \{(x_{k+1},\ldots,x_d)\}$ of cardinality $r_k$;
$X_{<k}$ and $X_{>k}$ are taken empty when $k=1$ and $k=d$, respectively.
Note that the number of samples in $X_{<k} \oplus X_k \oplus X_{>k}$ is $r_{k-1} n_k r_k$, equal to the number of unknowns in $\mathcal{U}^{(k)}$.
Therefore, one can resolve the linear interpolation equations 
$$
u(x) = \tilde u(x)\,, \quad \forall x \in X_{<k} \oplus X_k \oplus X_{>k}
$$
on the elements of $\mathcal{U}^{(k)}$ exactly, whenever $u^{(1)}(x_1)\cdots u^{(k-1)}(x_{k-1})$ for $(x_1,\ldots,x_{k-1}) \in X_{<k}$, $\phi^{(k)}(x_k)$ for $x_k \in X_k$ and $u^{(k+1)}(x_{k+1}) \cdots u^{(d)}(x_d)$ for $(x_{k+1},\ldots,x_d) \in X_{>k}$ are linearly independent.
Moreover, pivoting can be applied to thus computed $\mathcal{U}^{(k)}$ to identify next sampling sets $X_{<k+1}$ and $X_{>k-1}$ as subsets of $X_{<k} \oplus X_k$ and $X_k \oplus X_{>k}$, respectively, to make $u^{(1)}(x_1)\cdots u^{(k)}(x_{k})$ and $u^{(k)}(x_{k}) \cdots u^{(d)}(x_d)$ in the next step better conditioned.
Iterating over $k=1,\ldots,d$, the TT-Cross updates all TT cores until convergence.
Vector functions can be approximated component by component, since the number of components in the control problems we consider is small.
For a comprehensive textbook on tensor methods see e.g. \cite{hackbusch-2012}.

\section{A statistical POD approach for optimal control problems}

In this section we describe our main contribution, outlined in Figure~\ref{fig:workflow}. Our ultimate goal is the fast synthesis of the feedback control that is resilient to stochastic perturbations of the system.
We split the entire procedure into two stages.
In the first (\emph{offline}) stage, we construct a reduced state basis that minimizes the average squared projection error for an ensemble of controlled trajectories of random realizations of the dynamical system and/or initial state. Due to this averaging of random samples of the error, we call the span of thus obtained basis the Statistical POD subspace.
It accommodates states that are well controllable on average, and hence its dimension can be much smaller than the original problem dimension.
In the second (\emph{online}) stage, the system is projected onto the Statistical POD subspace, and an optimal control problem is solved for the reduced system in the feedback form.
Due to its smaller dimension, the reduced optimal control problem is faster to solve numerically, and can enable the real-time control synthesis.
The computations can be accelerated even further if the control function of the reduced problem is approximated in the TT format.
The latter step can be moved into the offline stage, making the online stage a mere TT interpolation.

\vspace{5mm}

{\bf Offline Stage}

The offline phase for the statistical POD method constitutes of the collection of snapshots and the construction of the reduced basis and, hence, the reduced dynamics. In this phase we rely on the application of the PMP system \eqref{pmp}-\eqref{pmp2} to obtain the optimal trajectory for each given sample of the parameter. We fix $N$ different realizations $\underline{\mu} = (\mu_1,\ldots,\mu_N) \in \R^{M \times N}$. For each realization $\mu_i \in \underline{\mu}$, we solve the system \eqref{pmp}-\eqref{pmp2} and we collect the corresponding optimal trajectory $(y^*(t_1;\mu_i), \ldots, y^*(t_{n_t};\mu_i))$ at some $n_t$ time instances $\{t_i\}_{i=1}^{n_t}$. The final snapshot matrix $Y_{\underline{\mu}}$ will contain all the optimal trajectories at the given time instances $t_1,\ldots,t_{n_t}$ and realizations $\mu_1,\ldots,\mu_N$:
\begin{equation}\label{eq:all_snapshots}
Y_{\underline{\mu}}=[y^*(t_1;\mu_1), \ldots,y^*(t_{n_t};\mu_1),y^*(t_{1};\mu_2), \ldots, y^*(t_{n_t};\mu_N)] \in \mathbb{R}^{d \times n_t N},
\end{equation}
where each $y^*(t_i;\mu_j) \in \R^d$ is a column vector.

At this point we perform a Singular Value Decomposition (SVD) of the snapshot matrix $Y_{\underline{\mu}} = U_{\underline{\mu}} \Sigma_{\underline{\mu}} V_{\underline{\mu}}^\top$ and we denote as $U_{\underline{\mu}}^\ell$ the first $\ell$ columns of the orthogonal matrix $U_{\underline{\mu}}$. The number of basis $\ell$ can be selected by making the truncation error
\begin{equation}
\mathcal{E_{\underline{\mu}}}(\ell) =\frac{ \sum_{i=1}^{\ell }\sigma^2_{i,{\underline{\mu}}}}{\sum_{i=1}^{\min\{N n_t, d\} } \sigma^2_{i,{\underline{\mu}}}}
\label{err_ind}
\end{equation}
below a desired threshold.
Here, $\{\sigma_{i,{\underline{\mu}}}\}_i$ are the singular values of the matrix $Y_{\underline{\mu}}$.
The quantity \eqref{err_ind} is related to the projection error arising in the truncation of the singular value decomposition to the first $\ell$ basis. To shorten the notation, in what follows, we will denote the reduced basis matrix as $U^\ell := U_{\underline{\mu}}^{\ell}$ .

The full order dynamics \eqref{eq} is then reduced via projection onto the subspace spanned by $U^\ell$ to the following reduced dynamics

\begin{equation}\label{eq_red}
\left\{ \begin{array}{l}
\frac{d}{dt}y^\ell(t;\mu)=(U^\ell)^\top f(U^\ell y^\ell(t;\mu),u(t;\mu),\mu), \;\; t\in(0,+\infty),\mu \in \underline{\mu},\\
y^\ell(0;\mu)=(U^\ell)^\top 
 x  \in\mathbb{R}^\ell.
\end{array} \right.
\end{equation}

To ease the notation, we define 
$$
f^\ell (y^\ell(t;\mu),u(t;\mu),\mu) = (U^\ell)^\top f(U^\ell y^\ell(t;\mu),u(t;\mu),\mu).
$$
The procedure for the offline stage is presented in Algorithm \ref{alg_1}.

\begin{algorithm}[H]
\caption{Offline Stage for the Statistical POD method}
\begin{algorithmic}[1]
\State Fix the final time $T$, a time discretization $\{t_i\}_{i=1}^{n_t}$, $N$ realizations $\{\mu_i\}_{i=1}^N$, truncation threshold $\varepsilon \ge 0$ and $Y_{\underline{\mu}}=[\,]$
\For{$i=1,\ldots,N$}\label{for_start}
\State{Solve the PMP system \eqref{pmp}-\eqref{pmp2}}
\State{Compute the optimal trajectory $Y_{\mu_i} = [y^*(t_1;\mu_i), \ldots,y^*(t_{n_t};\mu_i)]$}
\State{$Y_{\underline{\mu}}=[Y_{\underline{\mu}} \; Y_{\mu_i}]$}
\EndFor\label{for_end}
\State{Perform the SVD $Y_{\underline{\mu}} = U_{\underline{\mu}} \Sigma_{\underline{\mu}} V_{\underline{\mu}}^\top$}
\State{Select $\min\ell:~\mathcal{E_{\underline{\mu}}}(\ell)<\varepsilon^2$ \eqref{err_ind} and take the first $\ell$ columns of $U_{\underline{\mu}}$ into $U^\ell.$}
\State{Construct the reduced dynamics \eqref{eq_red}}
\end{algorithmic}
\label{alg_1}
\end{algorithm}

\begin{remark}
In general, $f(U^\ell y^\ell(t;\mu),u(t;\mu),\mu)$ is still computationally expensive since it requires the evaluation of the nonlinearity on the lifted variable $U^\ell y^\ell(t;\mu)\in \R^d$. For these cases one may apply Empirical Interpolation Method (EIM, \cite{barrault2004empirical}) and Discrete Empirical Interpolation Method (DEIM, \cite{chaturantabut2010nonlinear}) to solve this issue.
\end{remark}

\begin{remark}
In some cases, such as problems arising from semidiscretization of nonlinear PDEs, the dynamical system may present a quadratic-bilinear form:
\begin{equation}
    f(y(t;\mu),u(t;\mu),\mu) = A y + {\bf T} (y \otimes y )  + \sum_{i=1}^m  \theta_k(\mu) N_k y_k u_k + \theta_0(\mu) B u,
\end{equation}
where $A, N_k \in \R^{d \times d}$, ${\bf T} \in \R^{d \times d^2}$, $B \in \R^{d \times m}$ and $\theta_i: \mu \rightarrow \R$. In this case, we can assemble the reduced matrices
$$
A^\ell = (U^\ell)^\top A U^\ell,  \;N_k^\ell = (U^\ell)^\top N_k U^\ell,  \; {\bf T}^\ell = (U^\ell)^\top {\bf T} (U^\ell \otimes U^\ell), \, B^\ell = (U^\ell)^\top B
$$
and define the following reduced dynamics
\begin{equation}
\frac{d}{dt} y^\ell(t;\mu)= A^\ell y^\ell + {\bf T}^\ell (y^\ell \otimes y^\ell )  + \sum_{i=1}^m  \theta_k(\mu) N^\ell_k y^\ell_k u_k + \theta_0(\mu) B^\ell u,
\label{red_quad_bil}
\end{equation}
avoiding the use of empirical interpolation techniques.
\end{remark}

\vspace{5mm}

{\bf Online Stage}

Given the reduced dynamics \eqref{eq_red}, the reduced PMP for a given $\mu$ reads

\begin{align}\label{pmp_red}
\frac{d}{dt}y^\ell(t,\mu)&=f^\ell (y^\ell(t;\mu),u(t;\mu),\mu), \\
y^\ell(0;\mu)&=y^\ell_0(\mu),\\
-\frac{d}{dt} p^\ell(t;\mu)&=  \nabla_{y^\ell} f^\ell(y^\ell(t;\mu),u(t;\mu),\mu)^\top p^\ell(t;\mu)+\nabla_{y^\ell} L(U^\ell y^\ell(t;\mu),u(t;\mu)), \\
p^\ell(T;\mu)&=0 , \\
u(t)&=\argmin_{w \in U_{ad}}  \{L(U^\ell y^\ell(t;\mu),w) +f^\ell(y^\ell(t;\mu),w,\mu)^{\top} p^\ell(t;\mu)  \}\,.
\label{pmp_red2}
\end{align}

The reduced-order state variable is $\ell$-dimensional and it can be exploited to construct a faster approximation of the optimal control. The reduced-order dynamics enables the synthesis of an optimal feedback law, now set in a lower dimensional domain, helping in the mitigation of the curse of dimensionality.

Let us consider the feedback map $u^\ell: \mathbb{R}^ \ell \rightarrow U_{ad}$ obtained solving the reduced PMP \eqref{pmp_red}-\eqref{pmp_red2}. This controller can be used in the full order model dynamics considering first the projection of the state and then the evaluation of the feedback map:
\begin{equation}
\dot{y}(t;\mu)=f(y(t;\mu),u^\ell((U^\ell)^\top y(t;\mu )),\mu),
\label{ode_feedback}
\end{equation}
obtaining a control problem where the computation of the optimal feedback is independent from the original dimension of the dynamical system.

\begin{remark}
    The presented methodology is based on the construction of a snapshot set of optimal trajectories obtained via the resolution of the PMP system \eqref{pmp}-\eqref{pmp2}. The sampling of the controlled dataset may be derived also upon the SDRE framework introduced in Section \ref{sec:sdre}, solving the sequential AREs \eqref{sdre} along the optimal trajectory. Since the number of variables in the dynamical system may be arbitrary large, the SDRE approach requires efficient solvers for the resolution of high-dimensional Riccati equations. We refer to \cite{BBKS_2020} for a comprehensive discussion of such techniques.
\end{remark}

\begin{remark}
The singular values for the Statistical POD method may present a slow decay due to the nature of the problem and the size of the parameter domain. In this context a good approximation requires a large number of POD basis and the online stage will still be affected by the dimensionality problem. To this end, an efficient surrogate model for the feedback control in the reduced space accelerates the procedure, obtaining a fast and reliable method for the computation of the optimal control solution.  
We consider the representation of the feedback control $u^\ell(x^\ell)$ in the Functional Tensor Train format \eqref{eq:ftt}, obtained via a TT Cross procedure (see Section \ref{sec:TT}). The optimal reduced trajectories obtained by solving the system \eqref{pmp_red}-\eqref{pmp_red2} are employed as sampling points for the Cross approximation. 
\end{remark}

\begin{remark}
In contrast to existing POD techniques for the approximation of optimal control problems, the proposed Statistical POD method introduces stochastic terms in the dynamical system to explore the manifold of controlled solutions for the parameterized problem. This enforces the robustness of the corresponding reduced basis to perturbations, an essential feature for feedback control.
In particular, as shown in Section \ref{sec:err}, the empirical risk of the Statistical POD procedure converges to the expected risk. Thus, increasing the amount of training data for constructing the reduced-order system, we obtain a model that is more representative of the true system dynamics, including unseen scenarios.
\end{remark}
\subsection{Efficient computation of the snapshots}
\label{eff_snap}

The main building block in the offline phase is represented by the computation of different optimal trajectories via the PMP system \eqref{pmp}-\eqref{pmp2}. The resolution of this system relies strongly on the choice of the initial guess for the optimal control $u^*(\cdot)$. In this section we introduce a procedure which accelerates the high-dimensional computation exploiting the information about the optimum for the reduced problem. Let us consider a prefixed initial guess $u^0(\cdot)$. In the offline stage presented in Algorithm \ref{alg_1}, the different optimal trajectory realizations are stored in the snapshot matrix $Y_{\underline{\mu}}$ and the basis are constructed only at the end of the loop. An SVD decomposition can be added between Lines~\ref{for_start}-\ref{for_end} of Alg.~\ref{alg_1} to construct a reduced basis containing partial information on the statistical behaviour of the optimal trajectories and build a reduced dynamics. The constructed reduced system is employed for a fast computation of the approximated optimal control $u_{red}(\cdot)$, which can be used as initial guess for the high-dimensional computation. In particular, at the $i$-th step we consider as initial guess the control which achieves a lower cost functional, $i.e.$ 
\begin{equation}
u^i(\cdot) \in \argmin_{u(\cdot) \in \{u^0(\cdot),u_{red}^i(\cdot) \}} J(u) .
\label{guess}
\end{equation}

The optimized procedure is sketched in Algorithm \ref{alg_2}.

\begin{algorithm}[H]
\caption{Optimized Offline Stage for the Statistical POD method}
\begin{algorithmic}[1]
\State Fix the final time $T$, a time discretization $\{t_i\}_{i=1}^{n_t}$, $N$ realizations $\{\mu_i\}_{i=1}^N$, truncation threshold $\varepsilon\ge 0$, an initial control guess $u^0$ and $Y_{\underline{\mu}}=[\,]$
\For{$i=1,\ldots,N$}
\If{$i==1$}
\State{Solve the PMP system \eqref{pmp}-\eqref{pmp2} with initial guess $u^0$}
\Else{}
\State{Solve the reduced PMP system \eqref{pmp_red}-\eqref{pmp_red2} and compute $u_{red}^i$}
\State{Solve the PMP system \eqref{pmp}-\eqref{pmp2} with initial guess given by \eqref{guess}}
\EndIf
\State{Compute the optimal trajectory $Y_{\mu_i} = [y^*(t_1;\mu_i), \ldots,y^*(t_{n_t};\mu_i)]$}
\State{$Y_{\underline{\mu}}=[Y_{\underline{\mu}} \; Y_{\mu_i}]$}
\State{Perform the SVD $Y_{\underline{\mu}} = U_{\underline{\mu}} \Sigma_{\underline{\mu}} V_{\underline{\mu}}^\top$}
\State{Select $\min\ell:~\mathcal{E_{\underline{\mu}}}(\ell)<\varepsilon^2$ and take the first $\ell$ columns of $U_{\underline{\mu}}$ into $U^\ell.$}
\State{Construct the reduced dynamics \eqref{eq_red}}
\EndFor

\end{algorithmic}
\label{alg_2}
\end{algorithm}

\section{Expected risk of statistical POD}
\label{sec:err}

We analyse the proposed technique providing a rigorous error estimate on the expected risk. For this reason we first recall an eigenvalue perturbation theory, the Bauer-Fike theorem, which will be employed for the proof of Proposition \ref{prop1}.

\begin{theorem}[Bauer-Fike]
Let $A$ be an $d \times d$ diagonalizable such that $A = V \Lambda V^{-1}$ and let $E$ be an arbitrary $d \times d$ matrix. Then for every $\mu \in \sigma(A+E)$ there exists $\lambda \in \sigma(A)$ such that
\begin{equation*}
    |\lambda - \mu| \le \kappa_p(V) \Vert E \Vert_p
    \label{BF}
\end{equation*}
    where $\kappa_p(V)$ is the condition number in $p$-norm of the matrix $V$.
    \label{BFtheorem}
\end{theorem}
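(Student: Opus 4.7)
The plan is to follow the standard reduction-to-singularity argument. First, I would dispense with the trivial case: if $\mu \in \sigma(A+E)$ happens to also lie in $\sigma(A)$, choose $\lambda = \mu$ and the estimate holds. So assume $\mu \notin \sigma(A)$, meaning $\Lambda - \mu I$ is an invertible diagonal matrix.

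Next, since $\mu \in \sigma(A+E)$, the matrix $A + E - \mu I$ is singular. Using $A = V\Lambda V^{-1}$, I would factor
\begin{equation*}
A + E - \mu I = V\bigl[(\Lambda - \mu I) + V^{-1} E V\bigr] V^{-1} = V (\Lambda - \mu I)\bigl[I + (\Lambda - \mu I)^{-1} V^{-1} E V\bigr] V^{-1}.
\end{equation*}
Since $V$, $V^{-1}$, and $\Lambda - \mu I$ are all invertible, the bracketed factor $I + (\Lambda - \mu I)^{-1} V^{-1} E V$ must itself be singular. Equivalently, $-1$ is an eigenvalue of $(\Lambda - \mu I)^{-1} V^{-1} E V$, which forces
\begin{equation*}
1 \le \bigl\|(\Lambda - \mu I)^{-1} V^{-1} E V\bigr\|_p \le \bigl\|(\Lambda - \mu I)^{-1}\bigr\|_p \, \|V^{-1}\|_p \, \|E\|_p \, \|V\|_p = \kappa_p(V)\,\|E\|_p \, \bigl\|(\Lambda - \mu I)^{-1}\bigr\|_p,
\end{equation*}
using submultiplicativity of the operator $p$-norm and the identity $\kappa_p(V) = \|V\|_p \|V^{-1}\|_p$.

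To finish, I would use the fact that $(\Lambda - \mu I)^{-1}$ is diagonal with entries $(\lambda_i - \mu)^{-1}$, so that its operator $p$-norm equals $\max_i |\lambda_i - \mu|^{-1} = 1/\min_{\lambda \in \sigma(A)} |\lambda - \mu|$. Substituting and rearranging yields $\min_{\lambda \in \sigma(A)} |\lambda - \mu| \le \kappa_p(V)\|E\|_p$, which is exactly the claimed bound once we choose the minimizing $\lambda$.

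The only subtle point is justifying that the $p$-operator norm of a diagonal matrix equals its largest entry in absolute value. For $p \in \{1, 2, \infty\}$ this is immediate from the explicit formulas for induced norms (maximum absolute column sum, largest singular value, maximum absolute row sum). For general $p$ one invokes the standard fact that any induced norm of a diagonal matrix coincides with the maximum modulus of its diagonal entries, which follows by evaluating on standard basis vectors for the lower bound and using the triangle inequality for the upper bound. I do not expect any genuine obstacle here; the whole argument is a short manipulation once the factorization is written down.
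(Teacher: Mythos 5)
Your argument is correct and is the standard proof of the Bauer--Fike theorem: the factorization $A+E-\mu I = V(\Lambda-\mu I)\bigl[I+(\Lambda-\mu I)^{-1}V^{-1}EV\bigr]V^{-1}$, the observation that the bracketed factor must be singular, and the identity $\|(\Lambda-\mu I)^{-1}\|_p = 1/\min_{\lambda\in\sigma(A)}|\lambda-\mu|$ for a diagonal matrix are all handled properly, including the trivial case $\mu\in\sigma(A)$. The paper itself states this theorem as a recalled classical result without proof, so there is no alternative argument to compare against; your proof fills that gap correctly.
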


We are ready to formulate the error estimate on the expected projection error characterizing the statistical POD approach.
\begin{prop}
Consider $t$ as a uniformly distributed random variable on $[0,T]$, and consider $N$ iid realizations of the optimal trajectories $y^{(i)}:=y(t_i;\mu_i)$, $i=1,\ldots,N$.
Assume that $\sqrt{\mathrm{Var}[y_jy_k]} \le C<\infty$ for any $j,k=1,\ldots,d$, 
where $y_j$ is the $j$th component of the random vector $y(t,\mu)$.
Let
$$
G = \frac{1}{N}\sum_{i=1}^{N} y(t_i;\mu_i) y(t_i;\mu_i)^\top \quad \mbox{and} \quad G^* = \mathbb{E}[yy^\top],
$$
and let $U^{\ell} \in \mathbb{R}^{d \times \ell}$ be an orthonormal matrix of $\ell$ leading eigenvectors of $G$.
Then 
\begin{equation}\label{eq:er}
\mathbb{E}[\|y(t;\mu) - U^{\ell} (U^{\ell})^\top y(t;\mu)\|_2^2] \le \sum_{j=\ell+1}^{d} \lambda^*_j + 2d \ell \frac{C}{\sqrt{N}}
\le \sum_{j=\ell+1}^{d} \lambda_j + (d+\ell)d \frac{C}{\sqrt{N}},
\end{equation}
where $\lambda^*_j$ are eigenvalues of $G^*$ sorted descending,
and $\lambda_j$ are eigenvalues of $G$ sorted descending.
\label{prop1}
\end{prop}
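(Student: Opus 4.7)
The plan is to express the projection error as a trace gap between $G^*$ and its restriction to $\Range(U^\ell)$, and then control the discrepancy that arises from the fact that $U^\ell$ diagonalizes the empirical Gramian $G$ rather than its population limit $G^*$. First I would use orthonormality of $U^\ell$ to write
$$
\|y - U^\ell (U^\ell)^\top y\|_2^2 = \mathrm{tr}(yy^\top) - \mathrm{tr}\bigl((U^\ell)^\top yy^\top U^\ell\bigr),
$$
and then take expectation over a fresh sample $y=y(t;\mu)$ that is independent of the training data defining $U^\ell$. Using $\mathbb{E}[yy^\top]=G^*$ and the tower property, this yields
$$
\mathbb{E}\bigl[\|y - U^\ell (U^\ell)^\top y\|_2^2\bigr] = \mathrm{tr}(G^*) - \mathbb{E}\bigl[\mathrm{tr}((U^\ell)^\top G^* U^\ell)\bigr].
$$

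Next I would split $G^* = G + (G^* - G)$ and exploit that $U^\ell$ consists of the top $\ell$ eigenvectors of $G$, so $\mathrm{tr}((U^\ell)^\top G U^\ell) = \sum_{j=1}^\ell \lambda_j$, leaving a perturbation term bounded by $|\mathrm{tr}((U^\ell)^\top(G^*-G)U^\ell)| \le \ell \|G^*-G\|_2$ (since the inner matrix is $\ell\times\ell$ and its spectral norm is dominated by that of $G^*-G$). To transfer $\sum_{j=1}^\ell \lambda_j$ to $\sum_{j=1}^\ell \lambda_j^*$, I would invoke the Bauer--Fike theorem (Theorem~\ref{BFtheorem}) on the symmetric matrix $G^* = V^* \Lambda^* (V^*)^\top$ perturbed by $E=G-G^*$. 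Since $V^*$ is orthogonal, $\kappa_2(V^*)=1$, giving $|\lambda_j - \lambda_j^*| \le \|G-G^*\|_2$ for each sorted pair; summing over the first $\ell$ indices yields $\sum_{j=1}^\ell \lambda_j \ge \sum_{j=1}^\ell \lambda_j^* - \ell \|G-G^*\|_2$. Combining these two estimates gives
$$
\mathbb{E}\bigl[\|y - U^\ell (U^\ell)^\top y\|_2^2\bigr] \le \sum_{j=\ell+1}^d \lambda^*_j + 2\ell\, \mathbb{E}\|G-G^*\|_2.
$$

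It then remains to bound $\mathbb{E}\|G-G^*\|_2$. I would simply use $\|\cdot\|_2 \le \|\cdot\|_F$ together with Jensen's inequality to get $\mathbb{E}\|G-G^*\|_2 \le \sqrt{\mathbb{E}\|G-G^*\|_F^2}$. Because each entry $G_{jk} = N^{-1}\sum_i y_j^{(i)} y_k^{(i)}$ is an empirical mean of $N$ i.i.d.\ copies of $y_jy_k$, one has $\mathrm{Var}[G_{jk}] = N^{-1} \mathrm{Var}[y_jy_k] \le C^2/N$, so $\mathbb{E}\|G-G^*\|_F^2 \le d^2 C^2/N$ and therefore $\mathbb{E}\|G-G^*\|_2 \le dC/\sqrt{N}$. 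Substituting this into the previous display delivers the first inequality in~\eqref{eq:er}. For the second inequality, I would apply Bauer--Fike once more in the reverse direction, $\sum_{j=\ell+1}^d \lambda_j^* \le \sum_{j=\ell+1}^d \lambda_j + (d-\ell)\|G-G^*\|_2$ (understood in expectation on the right-hand side), which adds $(d-\ell)dC/\sqrt{N}$ to the constant and produces exactly the claimed factor $d+\ell$.

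The main obstacle I anticipate is bookkeeping rather than deep analysis: one must ensure that the Bauer--Fike bound, which is stated only pair-by-pair without a canonical matching, translates into the sorted term-by-term comparison between eigenvalues of $G$ and $G^*$. This matching is automatic for symmetric matrices (it is essentially Weyl's inequality and follows from $\kappa_2(V^*)=1$ combined with interlacing), but it requires a brief justification so that the sums telescope as needed. Once this matching is in place, the concentration estimate $\mathbb{E}\|G-G^*\|_F^2 \le d^2 C^2/N$ is immediate from the variance hypothesis and finishes the proof.
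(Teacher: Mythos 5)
Your proposal is correct and follows essentially the same route as the paper's proof: the same trace decomposition of the projection error, the same split $G^* = G + (G^*-G)$ with the $\ell\|G^*-G\|_2$ trace bound, Bauer--Fike for the sorted eigenvalue comparison, and the Jensen/Frobenius/variance argument giving $\mathbb{E}\|G-G^*\|_2 \le dC/\sqrt{N}$. Your remark about justifying the sorted term-by-term matching in Bauer--Fike (via symmetry/Weyl) is a fair point of care that the paper glosses over, but it does not change the argument.
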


\begin{proof}

Let $G^* = U_* \Lambda_* U_*^\top$ and $G = U \Lambda U^\top$ be the eigenvalue decompositions of $G^*$ and $G$, respectively,
where eigenvalues in $\Lambda$ and $\Lambda_*$ are sorted descending.
Note that the matrix $U^\ell$ contains the first $\ell$ columns of $U$. Moreover, let us introduce $P^\ell = U^\ell (U^\ell)^\top$ to shorten the notation.

Since $y(t;\mu)$ in \eqref{eq:er} is independent of precomputed trajectories $y^{(i)}$, and hence of $G$ and $U^{\ell}$, we can factorise the total expectation into those over $y$ and $G$,
$$
\mathbb{E}[\|y - U^{\ell} (U^{\ell})^\top y\|_2^2] = \mathbb{E}_G[\mathbb{E}_y[\|y - U^{\ell} (U^{\ell})^\top y\|_2^2]],
$$
and hence express first
$$
\mathbb{E}_y[\|y - P^\ell y\|_2^2] = \mathbb{E}_y[y^\top y - y^\top P^\ell y] = \mathrm{tr}(\mathbb{E}_y[yy^\top -  P^\ell y y^\top])
$$
due to orthogonality and the cyclic permutation under the trace of a matrix.
Now,
$$
\mathrm{tr}(\mathbb{E}_y[yy^\top]) =  \mathrm{tr}(\Lambda_*) =   \sum_{j=1}^{d} \lambda_j^*,
$$
while
\begin{align*}
\mathrm{tr}(\mathbb{E}_y[P^\ell  yy^\top]) & = \mathrm{tr}(P^\ell G^*)
 = \mathrm{tr}(P^\ell G) + \mathrm{tr}(P^\ell (G^* - G))  \\
& = \mathrm{tr}((U^\ell)^\top U \Lambda U^\top U^\ell)) + \mathrm{tr}(P^\ell (G^* - G)).
\end{align*}
Due to orthogonality, the first trace is just the trace of the leading $\ell \times \ell$ submatrix of $\Lambda$, which is $\sum_{j=1}^{\ell} \lambda_j$.
Now taking also $\mathbb{E}_G$, we obtain
\begin{align*}
\mathbb{E}[\|y - P^\ell y\|_2^2] & = \sum_{j=1}^{d} \lambda_j^* - \mathbb{E}_G[\sum_{j=1}^{\ell} \lambda_j] - \mathbb{E}_G[\mathrm{tr}((U^\ell)^\top (G^*-G) U^\ell)] \\
& = \sum_{j=\ell+1}^{d} \lambda_j^* + \mathbb{E}_G[\sum_{j=1}^{\ell} (\lambda_j^* - \lambda_j)] - \mathbb{E}_G[\mathrm{tr}((U^\ell)^\top (G^*-G) U^\ell)] \\
& \le  \sum_{j=\ell+1}^{d} \lambda_j^* + \ell \cdot \mathbb{E}_G[\|G^* - G\|_2] + \ell \cdot \mathbb{E}_G[\|G^* - G\|_2],
\end{align*}
due to the Bauer-Fike Theorem \ref{BFtheorem} (second term), and 
$$
|\mathrm{tr}(Q^\top A Q)| = |\sum_{j=1}^{\ell} q_j^\top A q_j| \le \ell \max_{j \in \{1,\ldots,\ell\}} |q_j^\top A q_j| \le \ell \|A\|_2
$$
for any $A \in \mathbb{R}^{d \times d}$ and orthonormal $Q \in \mathbb{R}^{d \times \ell}$
for the third term.

By the Jensen's inequality and norm equivalence,
$$
(\mathbb{E}_G[\|G^* - G\|_2])^2 \le \mathbb{E}_G[\|G^* - G\|_2^2] \le \mathbb{E}_G[\|G^* - G\|_F^2] = \sum_{j,k=1}^d \mathbb{E}_G[(G^*_{j,k} - G_{j,k})^2].
$$
Since $\mathbb{E}_G[G] = G^*$, $\mathbb{E}_G[(G^*_{j,k} - G_{j,k})^2] = \mathrm{Var}[G_{j,k}].$
In turn, $G_{j,k}$ is a sum of iid random variables $y^{(i)}_j y^{(i)}_k/N$.
For those,
$$
\mathrm{Var}[G_{j,k}] = N \cdot \mathrm{Var}[y^{(i)}_j y^{(i)}_k/N] = \frac{1}{N} \mathrm{Var}[y_j y_k] \le \frac{C^2}{N},
$$
where in the penultimate step we used that $y^{(i)}$ are samples from the same distribution of $y$.
Summing over $j,k$ and taking the square root gives the first claim of the proposition.
The second claim comes from rewriting
$$
\sum_{j=\ell+1}^{d} \lambda^*_j = \sum_{j=\ell+1}^{d} \lambda_j + \sum_{j=\ell+1}^{d} (\lambda^*_j - \lambda_j),
$$
and using again Theorem \ref{BFtheorem} to get $|\lambda^*_j - \lambda_j| \le \|G^* - G\|_2$.
\end{proof}

Note that the expected risk consists of the truncated eigenvalues (bias), as for the classical POD technique, and a variance term decaying as $1/\sqrt{N},$ as usual for Monte-Carlo methods.

\vspace{5mm}

\section{Neumann boundary control for 2D Burgers' equation}

To better illustrate the different blocks of our approach, in this section we study a stabilization problem for the solution of the viscous Burgers' equation in a bidimensional domain, where the control appears as a Neumann boundary condition. 

\subsection{Problem formulation}
More precisely, we consider the following state equation 
\begin{equation}
\begin{cases}
\partial_t y -\nu \Delta y + y \cdot \nabla y =\alpha y & (\xi,t,\mu) \in \Xi \times [0,T] \times \R^M, \\
\nu \partial_n y = u(t) & (\xi,t,\mu) \in \partial \Xi_1  \times [0,T] \times \R^M, \\
\nu \partial_n y = 0 & (\xi,t,\mu) \in \partial \Xi_2 \times [0,T] \times \R^M, \\
y(\xi,0;\mu) = \tilde y_0(\xi;\mu) &  (\xi,\mu) \in \Xi \times \R^M, \\
\end{cases}
\label{burgers}
\end{equation}
where $\nu >0$ is the viscosity constant, $\alpha \ge 0$ regulates the unstable term, $\xi=(\xi_1,\xi_2) \in \Xi = [0,1]^2$ is the spatial variable, $\partial \Xi_1 = \{0\} \times [0,1]$  and $ \partial \Xi_2 = \partial \Xi \setminus \partial \Xi_1$, see Figure \ref{fig:square}.
Our aim is to drive the dynamical system to the equilibrium $\overline{y} \equiv 0$ and the corresponding cost functional we want to minimize reads:
\begin{equation}
J_T(u; \mu) = \int_0^{T} \int_\Xi |y(\xi,t;\mu)|^2 \,d\xi \, dt + \int_0^{T} |u(t)|^2 \; dt .
\label{cost_burgers}
\end{equation}

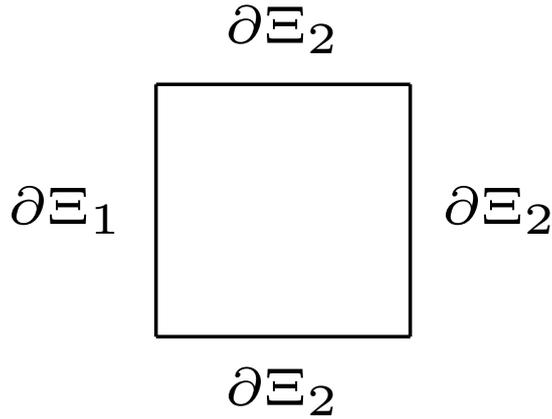
\begin{figure}[t!]
\centering
\resizebox{0.5\linewidth}{!}{
\begin{tikzpicture}{\small
 \draw (0,0) -- (0,1) node[left,midway] {\tiny{$\partial \Xi_1$}}; 
\draw (0,0) -- (1,0) node[below,midway] {\tiny{$\partial \Xi_2$}};
\draw (0,1) -- (1,1) node[above,midway] {\tiny{$\partial \Xi_2$}};
\draw (1,0) -- (1,1) node[right,midway] {\tiny{$\partial \Xi_2$}};
}
\end{tikzpicture}
}
\caption{Domain for the Burgers' equation \eqref{burgers}.}
\label{fig:square}
\end{figure}

For the application of the statistical POD technique, we assume that the initial condition is a realisation of the following random field,
\begin{equation}
\tilde{y}_0(\xi; \mu) =  y_0(\xi) +  \sum_{i=1}^{M_1} \sum_{j=1}^{M_2} (i+j)^{-\gamma} \mu_{i+(j-1)M_1} \cos(i \pi \xi_1) \cos(j \pi \xi_2),
\label{ic_stat}
\end{equation}
where $\mu_{i+(j-1)M_1} \sim \mathcal{N}(0, \sigma^2)$ are normally distributed random variables with zero mean and variance $\sigma^2$,
$M_1$ and $M_2$ are the maximal frequencies in the first and second spatial variables,
and $y_0(\xi)$ is the mean initial state.
We take $\sigma=\sigma_0=0.05$ and $y_0 \equiv 0.05$ by default.
The parameter $\gamma$ refers to the decay of the Fourier coefficients and it is related to the regularity we want to assume. In Figure \ref{icgamma} we can observe two initial conditions fixing $\gamma=3$ on the left and $\gamma=4$ on the right. Higher values for $\gamma$ correspond to smoother initial conditions since the Fourier coefficients decay more rapidly.

\begin{figure}[htbp]	
\centering
	\includegraphics[width=0.49\textwidth]{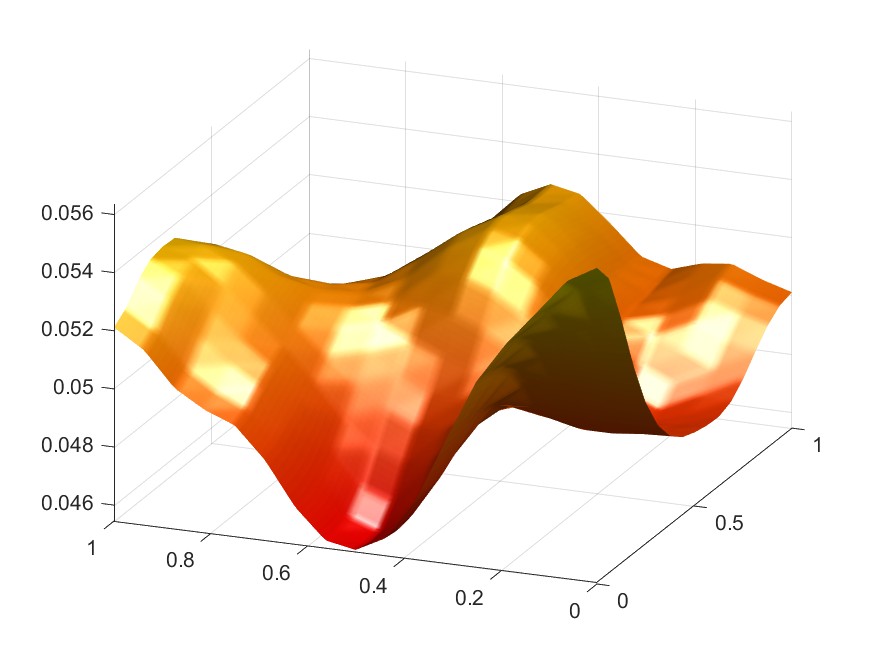}
	\includegraphics[width=0.49\textwidth]{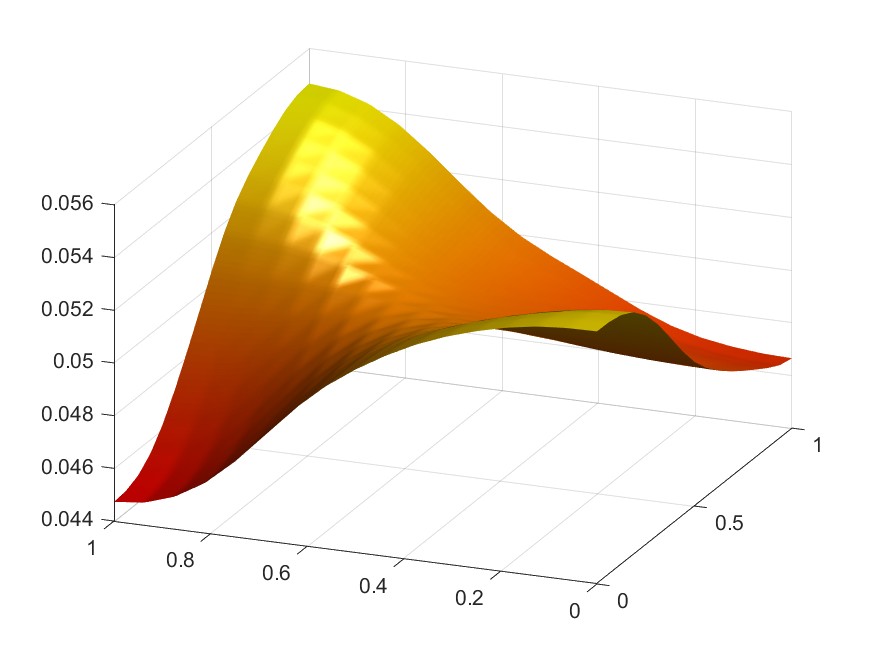}
\caption{Observed initial conditions for $\gamma = 3$ (left) and $\gamma=4$ (right).}	
\label{icgamma}
\end{figure}

First, we discretize the Burgers' equation \eqref{burgers} via $P^1$ finite elements $\{\phi_i\}_{i=1}^d$ in space centered at uniform grid points $\{(\xi^i_1,\xi^i_2)\}_{i=1}^d$, obtaining the following semidiscrete form
\begin{equation}
E \dot{y}(t) = (C+\alpha E) y(t) + F(y(t) \otimes y(t))+ B u(t),
\label{ode_burgers}
\end{equation}
where $E\in\R^{d \times d}$ is the mass matrix, $C\in\R^{d \times d}$ is the stiffness matrix, $F \in \R^{d \times d^2}$ arises from the nonlinear convective term and
$$
(B)_{i} = \begin{cases} 1 & (\xi^i_1,\xi^i_2)  \in \partial \Xi_1, \\
     0 & \mbox{otherwise,} \\
\end{cases} \quad i=1,\ldots,d.
$$
We apply an implicit Euler scheme for the time discretization. In particular, we consider $d=289$ finite elements and $n_t = 100$ time steps, fixing the final time $T=50$. The computation of the optimal trajectories is based on the Pontryagin's Maximum Principle system \eqref{pmp}-\eqref{pmp2}, solved via a gradient-descent method for each sample of $\mu$. More precisely, we consider the state-adjoint system and the gradient of the cost functional following \cite{heinkenschloss2008numerical}.

Once the reduced basis are computed, following the construction of the reduced dynamics \eqref{red_quad_bil}, the reduced system reads
\begin{align}\label{burg_red}
\frac{d}{dt}{y^\ell}(t;\mu) & = A^\ell y^\ell(t;\mu) + F^\ell (y^\ell \otimes y^\ell) + B^\ell u(t), \\
y^\ell(0;\mu) & = x^\ell = (U^\ell)^\top x, \nonumber
\end{align}
 which can be rewritten in the following semilinear form
\begin{equation}
\dot{y^\ell}(t;\mu) = \mathcal{A}^\ell(y^\ell) y^\ell(t;\mu)  + B^\ell u(t),
\label{burg_red_semi}
\end{equation}
where 
$$
(\mathcal{A}^\ell(y))(i,j) =  A^\ell(i,j) + \sum_{k=1}^{\ell} F^\ell(i,(j-1)\ell+k) \, y_k ,\quad i,j \in \{1,\ldots, \ell\}.
$$
Note that the reduced system \eqref{burg_red} is free from $\mu$ (which appears only the full initial state $\tilde y_0$), hence \eqref{burg_red} can be written in the usual feedback form where the reduced initial state is a variable $x^\ell$.
For the remaining part of the section we fix default $\gamma=4$, $M_1=M_2=8$, $\nu=0.02$ and $T=50$.

\subsection{Error indicators.} 
Throughout the numerical experiments, we will measure the error of the reduced model using the following error indicators:

\begin{equation}\label{eq:err-model}
\mathcal{E}_{J} = |J(u^*)-J(u_{red}^*)|, \quad \mathcal{E}_y = \sum_{k=1}^{n_t} (t_{k}-t_{k-1}) \Vert y(u^*,t_k)-y( u_{red}^*,t_k)\Vert_2,
\end{equation}
where $J(u)$ is the total cost of the full model computed on the control $u$, $u^*$ is the optimal control of the full model, $u_{red}^*$ is the optimal control of the reduced model, while $y(u,t)$ is the full system state with control $u$ at time $t$.
In other terms, $\mathcal{E}_{J}$  and $\mathcal{E}_y$ test respectively the cost and the trajectory in the original model when the control signal is computed using the reduced model. The TT performance will be assessed using
\begin{equation}\label{eq:err-tt}
    \mathcal{E}_{TT} = |J(u_{red,TT}) - J(u_{red})|,
\end{equation} 
where $u_{red}$ is any feasible feedback law for the reduced system (not necessarily optimal), and $u_{red,TT}$ is its TT approximation.
For simplicity, in the numerical tests we use the SDRE controller.

\subsection{SPOD accuracy in the stable case ($\alpha = 0$)}

We begin considering the stable case $\alpha=0$, avoiding the exponential growth for the solution, while in the second subsection we analyse the behaviour for the unstable case.
We apply the statistical POD strategy for the reduction of \eqref{burgers}-\eqref{cost_burgers} with $\alpha = 0$ and we compare it with the POD methods based on the information of two specific dynamics: the uncontrolled solution and the optimal trajectory starting from the reference initial condition $y_0$. The training for the statistical POD is based on $N=40$ independent optimal trajectories each started from a sample of the initial condition \eqref{ic_stat}.

In the left panel of Figure \ref{err_burg_st} we show the decay of singular values of $Y_{\underline{\mu}}$ varying the number of samples $N \in \{1,20,40\}$. As expected, inclusion of more snapshots from different model regimes produces a slower decay.

However, singular vectors of $Y_{\underline{\mu}}$ with more snapshots produce a more accurate reduced model.
In the right panel of Figure \ref{decay_sv} we compare the model reduction errors \eqref{eq:err-model} for three different sets of snapshots used for POD: 
$Y_{\underline{\mu}}$ consisting of $N=40$ controlled trajectories started from random samples of the initial condition ("statistical POD"),
one controlled trajectory started from the mean initial condition ("1 POD controlled"), and
one uncontrolled trajectory started from the mean initial condition ("1 POD uncontrolled").
In all three cases $\ell=20$ dominant singular vectors of the snapshot matrix are selected as the basis for model reduction.
We see that the uncontrolled snapshots give a very inaccurate basis for reduction of the controlled system,
using the controlled snapshots gives a more accurate model, especially when multiple random samples of snapshots are used.

\begin{figure}[htbp]	
\centering
	\includegraphics[width=0.49\textwidth]{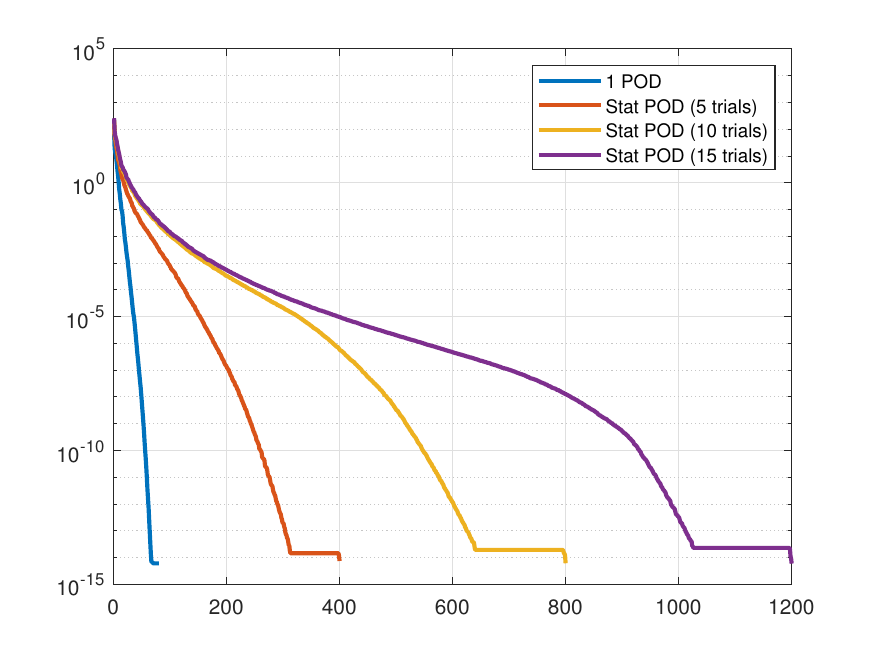}
\includegraphics[width=0.49\textwidth]{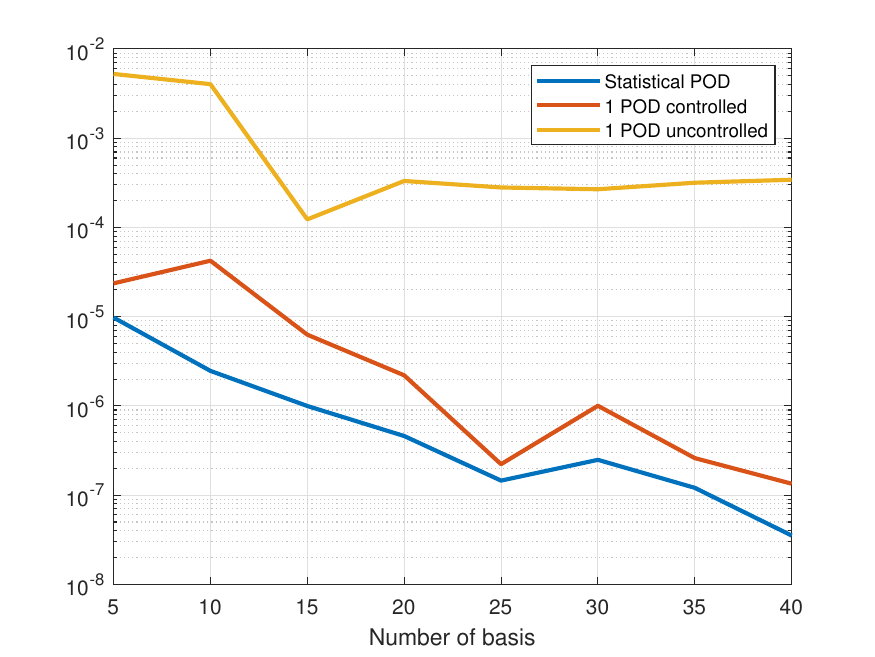}
\caption{Left: singular values of $Y_{\underline{\mu}}$ for different numbers $N$ of realisations of initial conditions. 
Right: mean error in the cost $\mathcal{E}_{J}$ depending on the reduced basis size $\ell$ for statistical POD built upon $N=40$ controlled trajectory realisations started from random iid samples from \eqref{ic_stat}, and the POD built upon 1 realisation of controlled and uncontrolled trajectory started from the mean $y_0$.
}
\label{decay_sv}
\end{figure}

 In Figure \ref{err_burg_st} we compare the cumulative CPU time for the computation of the offline phase with Algorithm~\ref{alg_1} and with its optimized version, Algorithm~\ref{alg_2}. The "not-optimized" algorithm has an almost constant increase in the CPU time and at the first iterations it performs slightly better since the statistical basis constructed with the first snapshots do not help in the construction of a good initial guess. On the other hand, the increase of the knowledge on the ensemble of reduced dynamics leads to a decrease of the CPU time at each step for the optimized version, demonstrating its beneficial support for the high-dimensional resolution of the PMP system.

\begin{figure}[htbp]	
\centering
	 \includegraphics[width=0.49\textwidth]{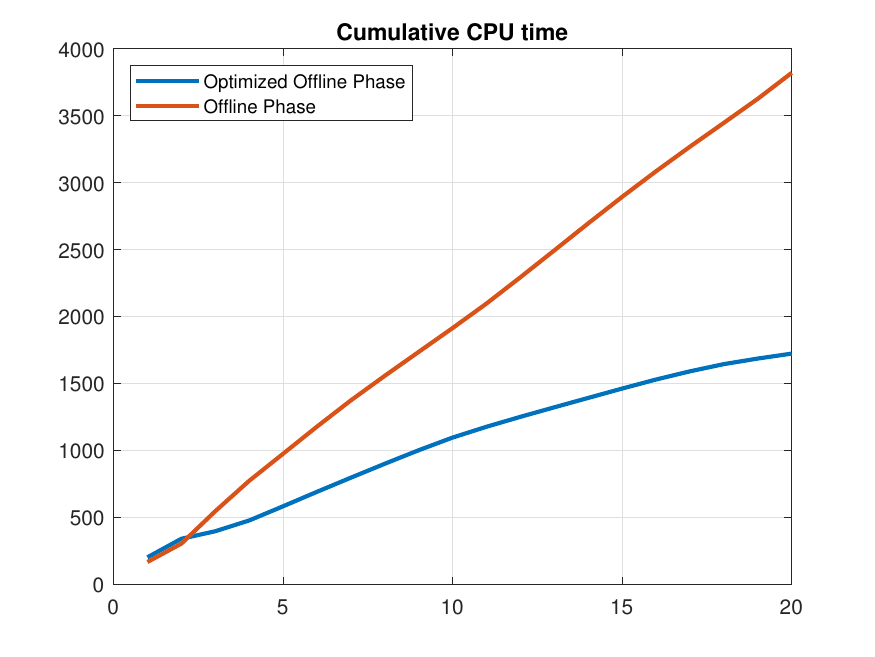}
\caption{Comparison of the CPU time for the offline phase and the optimized offline phase increasing the number of samples.}
\label{err_burg_st}
\end{figure}

In Table \ref{table_Burgers2} we compare the accuracy of the three techniques changing the parameters of the problem. In the first case we double the standard deviation for the random variables, such that $\mu_{i}\sim \mathcal{N}(0, 4\sigma_0^2)$, in the second case we consider $\gamma = 3$, obtaining a set of initial conditions less smooth. The dimension of the reduced basis in each case is $\ell=6$. The uncontrolled dynamics is not sufficient to build a good reduced model.

The POD technique based on controlled solutions obtains better results for both indicators, especially for the statistical POD. In particular, fixing $\gamma = 3$ the statistical POD reveals an improvement of one order of magnitude.

\begin{table}[hbht]
\centering
\begin{tabular}{c|cc|cc|cc}     
Error & \multicolumn{2}{c|}{Uncontrolled}  &  \multicolumn{2}{c|}{1 POD} & \multicolumn{2}{c}{SPOD} \\ 
     & $\sigma=2\sigma_0$ & $\sigma=\sigma_0$ & $\sigma=2\sigma_0$ & $\sigma=\sigma_0$ & $\sigma=2\sigma_0$ & $\sigma=\sigma_0$ \\
     & $\gamma=4$ & $\gamma=3$ & $\gamma=4$ & $\gamma=3$ & $\gamma=4$ & $\gamma=3$ \\\hline
$\mathcal{E}_y$ &  0.052 & 4.3e-3   &  1.11e-3 & 1.1e-3 &  3.17e-4 & 2.0e-4\\
$\mathcal{E}_{J}$ & 0.047 & 5.2e-3   & 1.65e-3 & 2.3e-4 &  1.58e-4 & 1.1e-4\\
 \end{tabular}
  \caption{Mean errors in the cost functional and optimal trajectory started from random iid samples from \eqref{ic_stat} with modified standard deviation or $\gamma$.
  The reduced basis size is $\ell=6$ for all methods.
  Statistical POD (SPOD) is computed from $N=20$ trajectories started from samples from the original \eqref{ic_stat} with $\sigma=\sigma_0$ and $\gamma=4$.
  }
 \label{table_Burgers2}
\end{table}

In Table \ref{diff_y0} we compare the 1 POD technique and the statistical POD with completely different initial data which cannot be represented in the form \eqref{ic_stat}. For these cases we fix the reduced basis size $\ell=30$ to obtain accurate approximations. We note that for all the test cases the statistical approach is more accurate than 1 POD, with a difference of almost one order of magnitude. Furthermore, we see that the accuracy is decreasing for the different choices of $y_0$. This is due to the fact that the distance between the initial conditions considered in Table \ref{diff_y0} and the mean of the ones represented in the form \eqref{ic_stat} is increasing, leading to the necessity of increasing the basis size to obtain the same accuracy. 
\begin{table}[htbp]
\centering
\begin{tabular}{c|cc|cc}     
      & \multicolumn{2}{c|}{SPOD} & \multicolumn{2}{c}{1 POD} \\
$y_0$ & $\mathcal{E}_{y}$ & $\mathcal{E}_{J}$ & $\mathcal{E}_{y}$ & $\mathcal{E}_{J}$ \\ \hline

0.1                     & 7.66e-5 & 3.14e-6 & 1.34e-4 & 3.11e-5 \\
0.2                     & 6.07e-5 & 1.81e-5 & 1.88e-4 & 1.52e-4 \\
$\cos(\pi \xi_1)\cos(\pi \xi_2)$& 4.72e-3 & 9.48e-3 & 1.87e-2 & 0.20    \\

 \end{tabular}
 \caption{Errors in the cost and controlled state started from different initial conditions for the 1 POD and SPOD (with $N=20$) methods, both building reduced bases of the same size $\ell=30$.}
 \label{diff_y0}
\end{table}

This demonstrates the higher accuracy of the statistical approach even for initial conditions far from the set of training samples.

\subsection{Faster control synthesis using a TT approximation}

We now assess the pre-computation of the TT approximation of the control in the feedback form, $u_{red,TT}(x^\ell) \approx u_{red}(x^\ell)$, aiming at faster synthesis of $u_{red,TT}(x^\ell)$ in the online regime by simply interpolating the TT decomposition, in contrast to computing $u_{red}(x^\ell)$ via optimisation.
Recall that as long as $x^\ell$ is fixed, the rest of the model is independent of $\mu$, therefore, $u_{red,TT}(x^\ell;\mu)$ is actually just $u_{red,TT}(x^\ell)$.
We fix the reduced model order $\ell = 20$ for all methods in this subsection.
Recall that $u_{red,TT}(x^\ell)$ is computed in the offline stage via the TT-Cross algorithm sampling $u_{red}(x^\ell)$ at certain states $x^\ell \in [-1,1]^{20}$. We fix the stopping $tol = 10^{-3}$ for the TT-Cross, and $n=6$ Legendre basis functions for discretizing each component of $x^\ell$. 
For faster computations in this offline stage, we compute $u_{red}(x^\ell)$ via the State-Dependent Riccati Equation (SDRE) applied to the semilinear system \eqref{burg_red_semi} instead of PMP.
This introduces a negligible difference to the total cost, $e.g.$ choosing $y_0 = 0.5\cos(\pi \xi_1) \cos(\pi \xi_2)$ the error between SDRE and PMP in the computation of the total cost is order $10^{-3}$.

In Table~\ref{table_Burgers_stable_tt} we notice that the SPOD method also gives a more structured reduced model which lends itself to a more accurate TT approximation of the optimal control.
\begin{table}[hbht]
\centering
\begin{tabular}{c|cc}     
  $y_0$    & SPOD &  1 POD \\ \hline
0.05   &  2.47e-6 & 1.90e-3   \\
0.1   &   7.69e-5  & 5.42e-3 \\
$0.5\cos(\pi \xi_1) \cos(\pi \xi_2)$  & 6.64e-6& 9.21e-3 \\
 \end{tabular}
  \caption{Average errors \eqref{eq:err-tt} due to the TT approximation for the 1 POD and SPOD (computed from $N=20$ trajectories) methods.}
 \label{table_Burgers_stable_tt}
\end{table}

Now we compare the SDRE controller (with and without TT approximation) and the LQR controller.
The advantage of the latter is that only one Riccati equation \eqref{are} (that for the linearization at the origin) needs to be solved and used in the computation of the feedback law \eqref{control_sdre_feed}, but we show that the absence of nonlinear terms in the control strategy leads to a slower stabilization of the system.
In the left panel of Figure \ref{sdre_lqr_cos} we show the shape of the initial condition $y_0 = 0.5\cos(\pi x) \cos(\pi y)$, and in the right panel we show the running cost of the solution with the three controllers.
We see that the LQR controller gives a higher cost, which makes it less attractive despite the computational simplicity.
The reduction of the computing time can be achieved using the TT interpolation instead.

In the same figure we see that the costs using the direct SDRE computation and the TT interpolation are indistinguishable, indicating a negligible error due to the TT approximation. 

\begin{figure}[htbp]	
\centering
\includegraphics[width=0.49\textwidth]{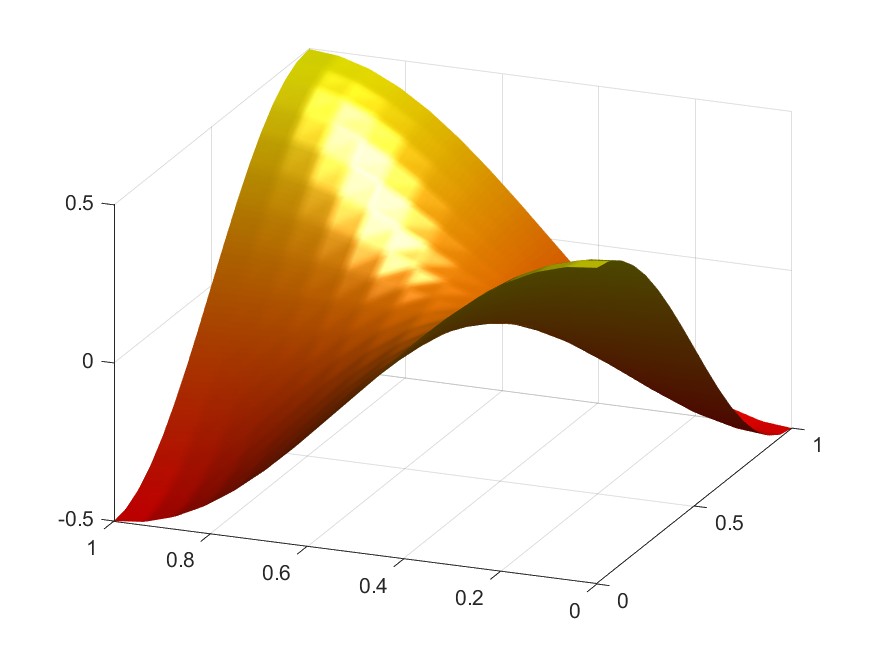}
	\includegraphics[width=0.49\textwidth]{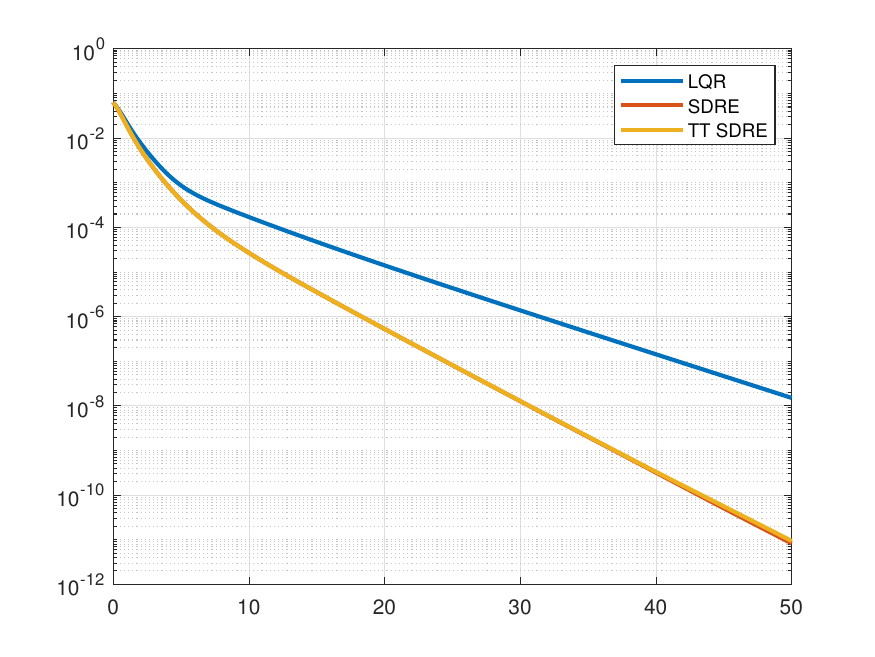}
\caption{Initial condition $y_0 = 0.5\cos(\pi \xi_1) \cos(\pi \xi_2)$ (left) and decay of the running cost with different techniques starting from this $y_0$ (right).}	
\label{sdre_lqr_cos}
\end{figure}

CPU times of computing the control for one system state are compared in Table \ref{table_Burgers_stable_cpu}.
The dimension of the reduced space is still fixed equal to $20$. First of all, we note that the application of SPOD enables a speed-up of 50 times for Pontryagin's, while the use of SDRE achieves a speed up of two orders with respect to the reduced PMP. Finally, the computation of a TT surrogate function gains a further extra speed-up order of magnitude, achieving in the end a final acceleration of 5 orders between the reduced TT-SDRE and the full PMP.

\begin{table}[hbht]
\centering
\begin{tabular}{c|c|c|c}     
  Full PMP    & Reduced PMP &  Reduced SDRE & Reduced TT \\ \hline
  41.22 s & 0.78 s &  4.25e-3 s &    4.57e-4 s
 \\  \hline
 \end{tabular}
  \caption{Comparison in terms of the CPU time for the computation of a single control with different FOM and ROM solver, fixing $\ell = 20$ and $y_0(\xi) = 0.5\cos(\pi \xi_1) \cos(\pi \xi_2)$.}
 \label{table_Burgers_stable_cpu}
\end{table}

\subsection{SPOD accuracy in the unstable case ($\alpha>0$)}
In this subsection we move to the unstable case, fixing the parameter $\alpha = 0.2$ in the dynamical system \eqref{burgers}. The addition of this term changes drastically the behaviour of the uncontrolled dynamics, characterized now by an exponential growth. It is clear that in this case the construction of the POD basis based on the uncontrolled solution is meaningless, since it quickly diverges far away from the desired state. 
In the left panel of Figure \ref{decay_sv2} we show the decay of the singular values of different POD strategies varying the number of sampled initial conditions. The decay of the singular values 

is slower than that for the stable dynamics, reflecting the more complex nature of the problem. In the right panel of Figure \ref{decay_sv2} we report the mean error in the computation of the cost functional for 1 POD and the statistical approach varying the basis size.
The initial conditions are sampled again in the form \eqref{ic_stat}. The POD method based on the uncontrolled dynamics is not reported since it achieves an error of order $\approx 10^5$, demonstrating its inefficiency in this context. SPOD presents a better accuracy for all the test cases.

\begin{figure}[htbp]	
\centering
	\includegraphics[width=0.49\textwidth]{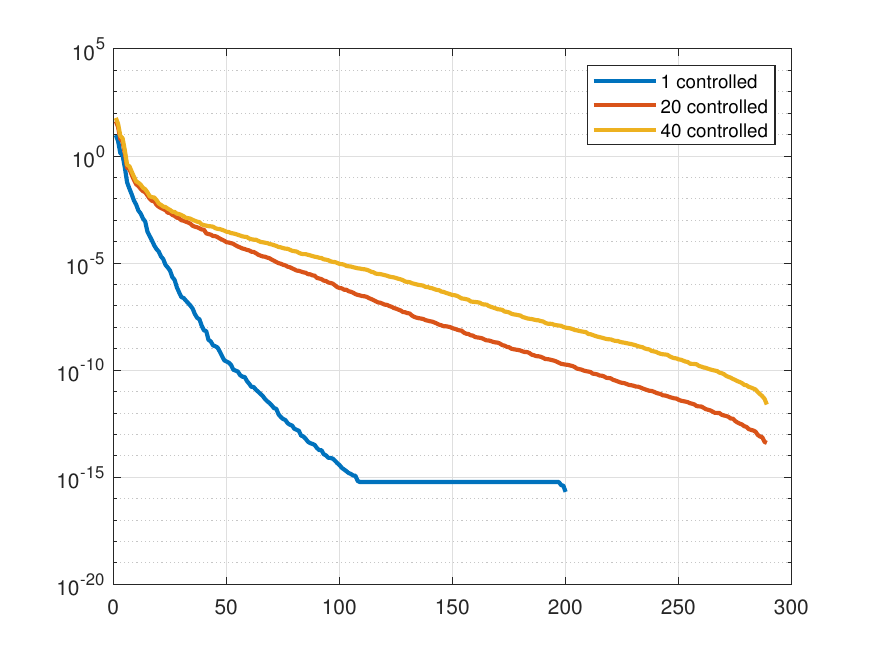}
 \includegraphics[width=0.49\textwidth]{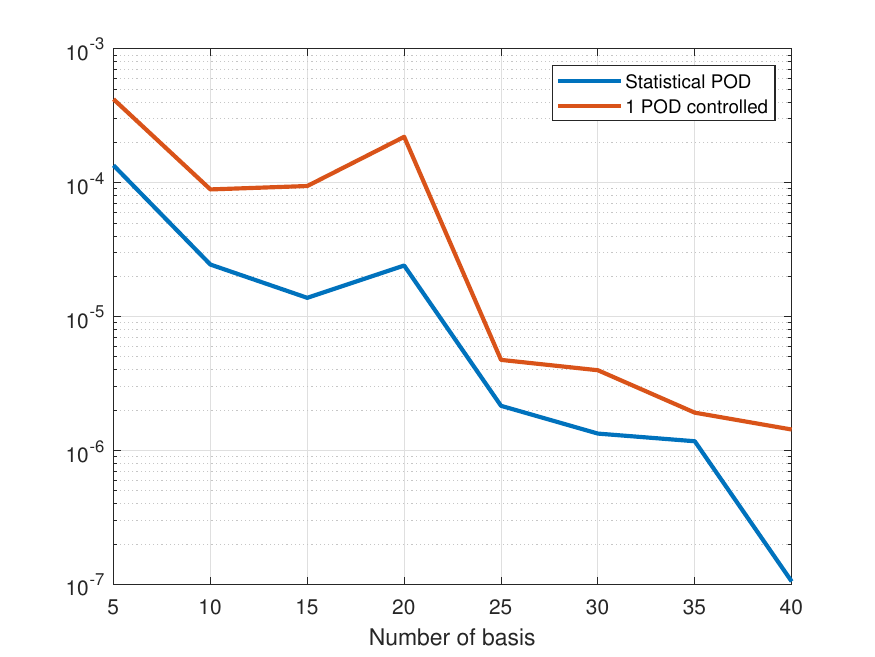}
\caption{Left: singular values of $Y_{\underline{\mu}}$ for different numbers of offline realisations $N$. Right: mean error in the cost $\mathcal{E}_{J}$ \eqref{eq:err-model} depending on the reduced basis size $\ell$ for SPOD built upon $N=40$ controlled trajectory realisations started from random iid samples from \eqref{ic_stat}, and the POD built upon 1 realisation of the controlled trajectory started from the mean $y_0$.}	
\label{decay_sv2}
\end{figure}

In Table \ref{table_Burgers2_unstable} we report the comparison of the two techniques varying the parameters appearing in the definition of the initial conditions \eqref{ic_stat} with $\ell =20$ reduced basis. Again, in the first columns we consider as random variable $\mu_{i} \sim \mathcal{N}(0, 4\sigma_0^2)$, doubling the reference standard deviation considered in \eqref{ic_stat}, while in the last columns we fix the decay exponent $\gamma=3$. The SPOD gets a better results for all the indicators, especially for the case $\sigma = 2 \sigma_0$, where $\mathcal{E}_y$ performs two order of magnitudes better than the standard POD.

\begin{table}[hbht]
\centering
\begin{tabular}{c|cc|cc}    
        & \multicolumn{2}{c|}{1 POD} & \multicolumn{2}{c}{SPOD} \\
Error   & $\sigma = 2\sigma_0$ & $\sigma=\sigma_0$ & $\sigma = 2 \sigma_0$ & $\sigma=\sigma_0$  \\
        & $\gamma=4$ & $\gamma=3$ & $\gamma=4$ & $\gamma=3$ \\\hline
$\mathcal{E}_y$     &  3.5e-4 & 3.91e-4 & 2.1e-6   &   1.33e-4\\
$\mathcal{E}_{J}$   &  7.3e-5 & 7.57e-5 & 1.8e-6   &   9.15e-6\\
 \end{tabular}
  \caption{Unstable Burgers' example: mean errors in the cost and optimal trajectory of the reduced model of order $\ell=20$ started from random iid samples from \eqref{ic_stat} varying the standard deviation $\sigma$ or the decay parameter $\gamma$. The SPOD basis is built from $N=20$ trajectories started from samples from the original \eqref{ic_stat} with $\sigma=\sigma_0$ and $\gamma=4$.
 }
 \label{table_Burgers2_unstable}
\end{table}

Finally, we pass to the application of the SDRE considering the reduced dynamical system  \eqref{burg_red_semi} and we apply the Tensor Train Cross for the construction of a surrogate model. We fix a number of basis $\ell=20$. In Table \ref{table_Burgers_unstable_tt} we compare the total cost obtained using LQR, SDRE and its approximation via TT. We immediately notice that SDRE achieves better results than LQR for all the study tests and the TT approximation obtains almost the same cost as SDRE for the digits displayed.

\begin{table}[hbht]
\centering
\begin{tabular}{c|ccc}     
  $y_0$    & LQR & SDRE &  TT-SDRE \\ \hline
0.05   &  0.0809 &  0.0808 & 0.0808   \\
 Random i.c. \eqref{ic_stat}  &    0.0798  &   0.0756 &   0.0756 \\
$0.5\cos(\pi \xi_1) \cos(\pi \xi_2)$  & 0.1592 & 0.0879 & 0.0878 \\
 \end{tabular}
  \caption{Unstable Burgers' example: total cost $J$ with different initial conditions for LQR, SDRE and TT-SDRE with SPOD (with $N=20$ samples) methods.}
 \label{table_Burgers_unstable_tt}
\end{table}

The left panel of Figure \ref{sdre_lqr_cos_unstable} shows the configuration of the controlled solution via TT-SDRE at final time with infinity norm of order $10^{-2}$. In the right panel of Figure \ref{sdre_lqr_cos_unstable} the running costs for LQR, SDRE and TT SDRE. There is a visual superposition of the curves for SDRE and TT SDRE, reflected also in Table \ref{table_Burgers_unstable_tt}. The SDRE controller is able to reduce the running cost faster, which is reflected by the lower total cost in Table \ref{table_Burgers_unstable_tt}.

\begin{figure}[htbp]	
\centering
 \includegraphics[width=0.49\textwidth]{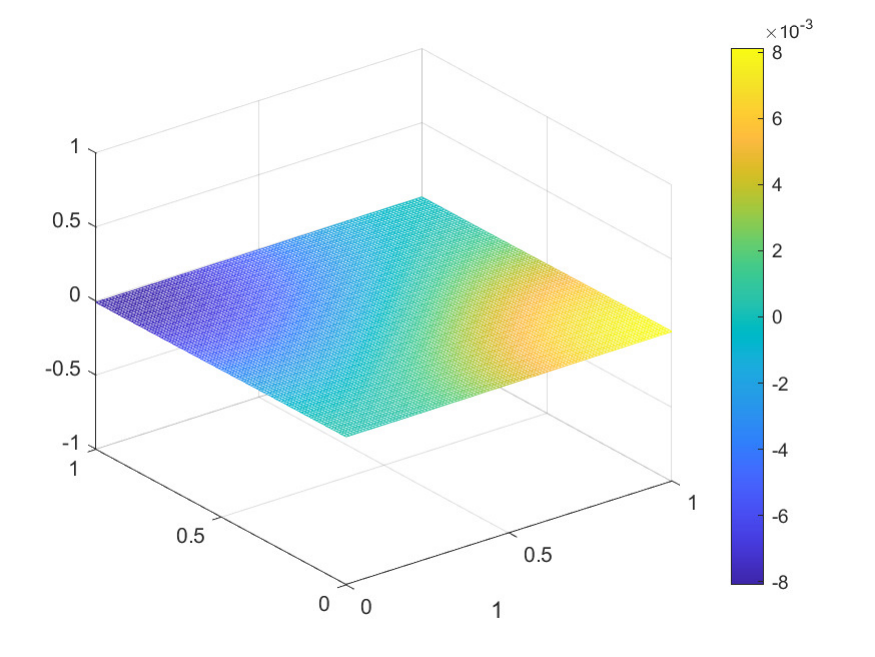}
	\includegraphics[width=0.49\textwidth]{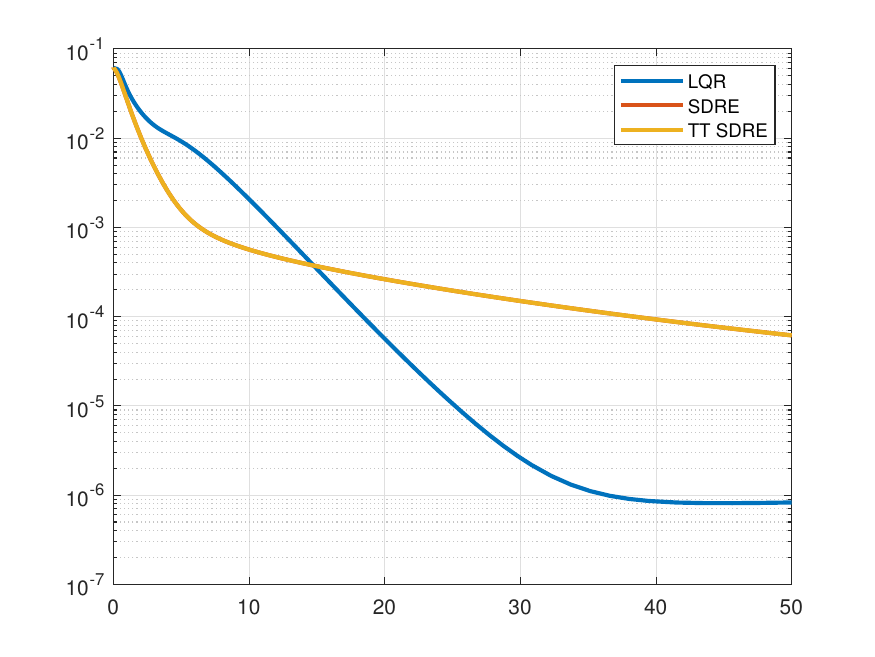}
\caption{Unstable Burgers' example: final configuration for the controlled solution via TT-SDRE (left) and decay of the running cost with different techniques (right)  starting from $y_0 = 0.5 \cos(\pi \xi_1) \cos(\pi \xi_2)$. }	
\label{sdre_lqr_cos_unstable}
\end{figure}

\section{Dirichlet boundary control for the Navier-Stokes equations}
We consider a more challenging problem: the optimal control of the 2D incompressible Navier-Stokes (NS) equation via Dirichlet boundary control.

\subsection{Problem formulation}

The NS equation reads
\begin{equation}
\begin{cases}
\partial_t y -\nu \Delta y + y \cdot \nabla y +\nabla p =0 & (\xi,t,\mu) \in \Xi \times [0,T] \times \R^M, \\
\nabla \cdot y = 0 & (\xi,t,\mu) \in \Xi \times [0,T] \times \R^M, \\
  y = g(\xi;\mu) & (\xi,t,\mu) \in \Gamma_{in} \times [0,T] \times\R^M,\\
 y = 0 & (\xi,t,\mu) \in \Gamma_{w} \times [0,T]\times\R^M,\\
  y =  u(t) & (\xi,t,\mu) \in \Gamma_{u} \times [0,T]\times\R^M,\\
  \nu \partial_n y  - p \vec{n} = 0 & (\xi,t,\mu) \in \Gamma_{out} \times [0,T]\times\R^M,\\
y(\xi,0;\mu) = y_0(\xi;\mu) &  (\xi,\mu) \in \Xi\times\R^M, \\
\end{cases}
\label{NSequation}
\end{equation}
where $\nu >0$ is the viscosity parameter and
\begin{equation}
g(\xi;\mu) = 4\xi_2(1-\xi_2) + \frac{1}{2} \sum_{k=1}^{M} k^{-\gamma} \sin(2\pi k \xi_2) \mu_k,
\label{g_stat}
\end{equation}
is the uncertain inflow with $\mu = (\mu_1,\ldots,\mu_{M})$ being independent random variables each distributed uniformly on $[-c,c]$, where the half-width $c>0$ will be varied in the numerical tests. Note that the mean inflow is given by
\begin{equation}
    \overline{g}(\xi) = 4\xi_2(1-\xi_2).
    \label{mean_inflow}
\end{equation}
The initial condition is set as
$$
y_0(\xi;\mu) = \begin{cases}g(\xi;\mu), & \xi \in \Gamma_{in}, \\ 0, & \mbox{otherwise.} \end{cases}
$$
The equation is posed on a backward facing step domain, illustrated in Figure~\ref{fig:backstep}.

\begin{figure}[t!]
\centering
\resizebox{\linewidth}{!}{

\begin{tikzpicture}{\small
 \draw[thick,blue] (-1,0) -- (-1,1) node[left,midway] {$\Gamma_{in}$}; 
 \draw[ thick] (-1,1) -- (5,1) node[above,midway] {$\Gamma_{w}$}; 
 \draw[ thick] (-1,0) -- (0,0) node[below,near start] {$\Gamma_{w}$}; 
 \draw[ thick,red] (0,0) -- (0,-1) node[right,midway] {$\Gamma_{u}$};  
 \draw[ thick] (0,-1) -- (5,-1) node[below,midway] {$\Gamma_{w}$}; 
 \draw[thick,dashed] (5,-1) -- (5,1) node[right,midway] {$\Gamma_{out}$}; 

 Inflow arrows g=4 y (1-y)
 \draw[->,blue] (-1,0.125) -- (-0.5625,0.125);
 \draw[->,blue] (-1,0.250) -- (-0.2500,0.250);
 \draw[->,blue] (-1,0.375) -- (-0.0625,0.375);
 \draw[->,blue] (-1,0.500) -- ( 0.0000,0.500) node[pos=0.5,anchor=center] {$g$};
 \draw[->,blue] (-1,0.625) -- (-0.0625,0.625);
 \draw[->,blue] (-1,0.750) -- (-0.2500,0.750);
 \draw[->,blue] (-1,0.875) -- (-0.5625,0.875);

 Corners
\node[anchor=south] at (-1,1.02) {$(-1,1)$};
\node[anchor=south] at (5,1.02) {$(5,1)$};
\node[anchor=north] at (0,-1.02) {$(0,-1)$};
\node[anchor=west] at (-0.05,0.05) {$(0,0)$};
}
\end{tikzpicture}
}
\caption{The backward step domain for the Navier-Stokes flow with uncertain inflow}
\label{fig:backstep}
\end{figure}
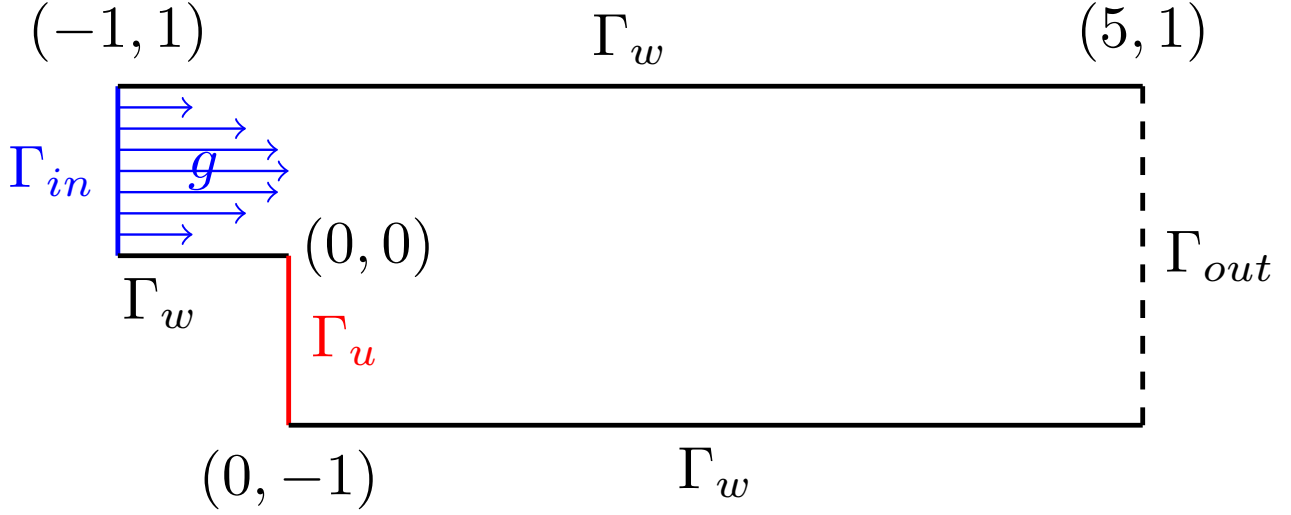

The control $u(t)$ is taken in the piecewise-constant form
$$
u(t) = \sum_{i=0}^{n_t} \begin{bmatrix}
    u_1(t_i) \\ u_2(t_i) 
\end{bmatrix} \chi_{[t_i,t_{i+1})}(t),
$$
where $\{t_i\}_{i=0}^{n_t}$ is a uniform discretization of the time interval $[0,T]$.

We introduce the following cost functional
\begin{equation}
    J_T(y,u;\mu) = \int_0^T \int_\Xi |\nabla \times y (\xi,t,\mu)|^2 \,d \xi \, dt + \int_0^{T}\delta |u(t)|^2 \; dt
    \label{cost_NS}
\end{equation}
with the aim of minimizing the vorticity of the flow over a time interval $[0,T]$ with a penalty cost for the control weighted by the parameter $\delta >0$.

Since both the control and the random field appear in the boundary conditions, it is convenient to split the solution in the form
\begin{equation}
y = \widetilde{y} +\underline{y},
\label{sum_form}
\end{equation}
where $\widetilde{y}$ takes into account the boundary conditions, and $\underline{y}$ has homogeneous boundary conditions, and formulate the feedback control problem on $\underline{y}$.
In turn, $\widetilde{y}$ needs to satisfy only the boundary and divergence-free conditions, so we choose it as the solution of the stationary Stokes equation 
\begin{equation}
\begin{cases}
 -\nu \Delta \widetilde{y}  +\nabla \widetilde{p} =0 & (\xi,\mu) \in \Xi  \times \R^M, \\
\nabla \cdot \widetilde{y} = 0 & (\xi,\mu) \in \Xi  \times\R^M, \\
 \widetilde{y} = g(\xi;\mu) & (\xi,\mu) \in \Gamma_{in} \times \R^M,\\
 \widetilde{y} = 0 & (\xi,\mu) \in \Gamma_{w} \times \R^M,\\
  \widetilde{y} =  u & (\xi,\mu) \in \Gamma_{u}  \times \R^M,\\
  \nu \partial_n \widetilde{y}  - p \vec{n} = 0 & (\xi,\mu) \in \Gamma_{out} \times \R^M.\\
\end{cases}
\label{Sequation}
\end{equation}
Note that $\widetilde{y}$ is a linear function of both $u$ and $g(\xi;\mu)$, which is in turn a linear function of $\mu$ due to \eqref{g_stat}.
This allows us to use superposition and write the Stokes solution in the form of a linear map,
\begin{equation}
\widetilde{y}(\widetilde{\mu},u) = \widetilde{Y} \widetilde{\mu} + U u, \qquad \widetilde{Y} = \begin{bmatrix}\widetilde{Y}_0 & \ldots & \widetilde{Y}_{M}\end{bmatrix}, \quad U = \begin{bmatrix}U_1 & U_2\end{bmatrix},
\label{eq:ytildemap}
\end{equation}
where $\widetilde{Y}_0$ is the solution of \eqref{Sequation} with $\mu=0$ and $u=0$ (that is, the uncontrolled Stokes solution with the mean inflow), $\widetilde{Y}_j$ for $j=1,\ldots,M$ is the solution of \eqref{Sequation} with $\mu_{j}=1$, $\mu_i=0$ for $i\neq j$ and $u=0$,
$$
\widetilde{\mu} = \begin{bmatrix}1 & \mu_1 & \cdots & \mu_{M}\end{bmatrix}^\top
$$ 
is the random vector augmented by $1$ for brevity of the map \eqref{eq:ytildemap}, and 
$U_j$ is the solution of \eqref{Sequation} with $g(\xi;\mu)=0$, $u_j=1$, $u_i=0$ for $i\neq j$.
As a by-product, this allows us to precompute \eqref{Sequation} without time dependence of $u(t)$, only for those $M+3$ initial inputs.

Plugging \eqref{eq:ytildemap} into \eqref{NSequation} we can write the following Navier-Stokes-type equation on $\underline{y}$:
\begin{equation}
\begin{cases}
\partial_t \underline{y} -\nu \Delta \underline{y} + \widetilde{y} \cdot \nabla \underline{y} + \underline{y} \cdot \nabla \widetilde{y} + \underline{y} \cdot \nabla \underline{y} + \nabla \underline{p} = \nu\Delta\widetilde{y} - \widetilde{y} \cdot \nabla \widetilde{y}, \\
\nabla \cdot \underline{y} = 0,
\end{cases}
\label{eq:NShom}
\end{equation}
with all boundary and initial conditions homogeneous.

\subsection{Semi-discretization}

We discretize \eqref{Sequation} and \eqref{eq:NShom} in space using the stable $P_2-P_1$ Taylor-Hood finite elements pair, involving bilinear elements $\{\varphi_i\}_{i=1}^{d_p}$ for the pressure and biquadratic elements $\{\phi_i\}_{i=1}^{d_v}$ for the velocity, duplicated such that $i=1,\ldots,d_v/2$ indexes the first component of the velocity, and $i=d_v/2+1,\ldots,d_v$ indexes the second component. By convention, $\phi_i$ and $\phi_{j}$ for any $i\le d_v/2$ and $j>d_v/2$ are assumed non-overlapping. The resulting semidiscretization of \eqref{eq:NShom} reads
\begin{equation}
\begin{cases}
E \dot{\underline{y}}(t) + A \underline{y}(t) + F(\widetilde{y}) \underline{y}(t) + F^*(\widetilde{y}) \underline{y}(t) + F(\underline{y}) \underline{y}(t) + D^{\top} p(t) = f_0(t), \\
D \underline{y}(t) = 0 ,
\end{cases}
\label{semiDiscreteNS}
\end{equation}
where $E,A,F(\cdot),F^*(\cdot)$ and $D$ are mass and stiffness matrices with elements
\begin{align*}
(E)_{i,j} & = \int_D \phi_i \phi_j \, d\xi, & i,j &=1, \ldots, d_v, \\
(A)_{i,j} & = \int_D \nu \nabla \phi_i \cdot \nabla \phi_j \, d\xi, & i,j &=1, \ldots, d_v, \\
(F(v))_{i,j} & = \int_D \phi_i(v \cdot \nabla) \phi_j \, d\xi, &  i,j &=1, \ldots, d_v, \\
(F^*(v))_{i,j} & = \int_D \phi_i \phi_j \partial_{\xi_k} v \, d\xi, &  i,j &=1, \ldots, d_v, \; k = \begin{cases}1, & j \le d_v/2, \\ 2, & j>d_v/2,\end{cases} \\
(D)_{i,j} & = (D_{1})_{i,j} + (D_{2})_{i,j}, && \mbox{where} \\
(D_{k})_{i,j} & = \int_D \varphi_i \partial_{\xi_k} \phi_j \, dx, & k&=1,2, \;i=1, \ldots, d_p, \;j =1, \ldots, d_v, \\
f_0(t) & = -A \widetilde{y}(\widetilde{\mu},u(t)) - F(\widetilde{y})\widetilde{y}.
\end{align*}
Note that $\dot{\widetilde{y}} = 0$ almost everywhere since $u(t)$ is piecewise-constant.
To obtain a feedback form, we can further separate $\widetilde{\mu}$ and $u$:
\begin{equation}
\begin{cases}
E \dot{\underline{y}}(t) + A \underline{y}(t) + C(\widetilde{\mu}) \underline{y}(t) \\
+ F_{yu}(\underline{y} \otimes u) 
+ F(\underline{y}) \underline{y}(t) + D^{\top} p(t) = f(\widetilde{\mu}) - B(\widetilde{\mu})u - F_{uu}(u \otimes u), \\
D \underline{y}(t) = 0 ,
\end{cases}
\label{eq:NS-mu-u}
\end{equation}
where
\begin{align*}
C(\widetilde{\mu}) & = \sum_{j=0}^{M} \widetilde{\mu}_j (F(\widetilde{Y}_j) + F^*(\widetilde{Y}_j)), \\
(F_{yu})_{i,k+2(j-1)} & = (F(U_k))_{i,j} + (F^*(U_k))_{i,j}, & i,j & = 1,\ldots,d_v, \; k=1,2,\\
(B(\widetilde{\mu}))_{i,k} & = (AU)_{i,k} + \sum_{j=0}^{M} \widetilde{\mu}_j (F(\widetilde{Y}_j)U_k + F(U_k)\widetilde{Y}_j)_i, & i&=1,\ldots,d_v, \; k=1,2,\\
(F_{uu})_{i,j+2(k-1)} & = (F(U_j)U_k)_i, & i&=1,\ldots,d_v, \; j,k=1,2,\\
f(\widetilde{\mu}) & = -A \widetilde{Y} \widetilde\mu - \sum_{j=0}^{M} \widetilde{\mu}_j F(\widetilde{Y}_j) \widetilde{Y} \widetilde{\mu}. \\
\end{align*}

Applying the same semidiscretization to the cost functional, we obtain
\begin{align}\label{disc_cost_NS}
\tilde{J}_T(y,u) & = \int_0^T  y(t)^\top D_{vort} y(t) +\delta |u(t)|^2 \; dt \\
& = \int_0^T \underline{y}(t)^\top D_{vort} \underline{y}(t) + u(t)^\top (\delta  I_2 + U^T D_{vort} U) u(t) dt \nonumber \\
& + \int_0^T 2 \widetilde{\mu}^\top \widetilde{Y}^\top D_{vort} \underline{y}(t) + 2 u(t)^\top U^\top D_{vort} \underline{y}(t) dt, \nonumber
\end{align}
where $I_2 \in \mathbb{R}^{2 \times 2}$ is the identity matrix and
$$
D_{vort} = (D_{2}-D_{1})^\top (D_{2} -D_{1}).
$$

The system of ODEs \eqref{eq:NS-mu-u}
is approximated via an implicit Euler scheme with $n_t$ time steps, and the resulting nonlinear algebraic system is solved via a Newton's method with stopping tolerance $10^{-6}$.

By default, we fix $M=8$, $\gamma=3$, $\nu=2\cdot 10^{-3}$, $\delta=10^{-3}$, $T=20$, $n_t=80$, $d_v=10382$, $d_p=1340$ obtaining a problem of dimension $d=11722$. 

\subsection{Model Order Reduction}

Next, we pass to the construction of the reduced basis and the corresponding reduced dynamical system.
Fixing a realization $\mu$, the corresponding snapshot matrix $Y_{\mu}$ will be formed just from the divergence-free velocity snapshots $\{\underline{y}^0,\ldots, \underline{y}^{n_t}\}$ with homogeneous boundary conditions. Since the solution of the ROM dynamics is a linear combination of the snapshots, the reduced trajectory directly benefits from the divergence-free property and homogeneous boundary conditions and it solves following reduced ODEs system
\begin{equation}
\begin{cases}
E^\ell \dot{y}^\ell(t) + A^\ell y^\ell(t)+ C^\ell(\widetilde{\mu}) y^\ell(t) \\ +  F^\ell_{yu}( y^\ell(t) \otimes u) +  F^\ell (y^\ell(t)) y^\ell(t) = f^\ell({\widetilde{\mu}})  -B^\ell(\widetilde{\mu})u-F^\ell_{uu}(u \otimes u), \\
y^\ell(0) = x^\ell,
\end{cases}
\label{red_semiDiscreteNS}
\end{equation}
where
\begin{align*}
E^\ell & = (U^\ell)^\top E U^\ell, \\
A^\ell & = (U^\ell)^\top A U^\ell, \\
C^\ell(\widetilde{\mu}) & =  \sum_{j=0}^{M} \widetilde{\mu}_j  \left[(U^\ell)^\top F(\widetilde{Y}_j) U^\ell + (U^\ell)^\top F^*(\widetilde{Y}_j) U^\ell\right] , \\
F^\ell_{yu} & = (U^\ell)^\top F_{yu} (U^\ell \otimes I_{2}), \\
F^\ell (y^{\ell}) & = \sum_{k=1}^{\ell} y^{\ell}_k \left[(U^\ell)^\top F(U^\ell_k) U^\ell\right],  \\
(B^\ell(\widetilde{\mu}))_{k} & = (U^{\ell})^\top A U + \sum_{j=0}^{M} \widetilde{\mu}_j \left[(U^\ell)^\top F(\widetilde{Y}_j) U_k + (U^\ell)^\top F(U_k) \widetilde{Y}_j\right], & k=1,2, \\
F_{uu}^\ell & = (U^{\ell})^\top F_{uu}, \\
f^\ell({\widetilde{\mu}}) & = -(U^{\ell})^\top A \widetilde{Y} \widetilde\mu - \sum_{j=0}^{M} \widetilde{\mu}_j (U^{\ell})^\top F(\widetilde{Y}_j) \widetilde{Y} \widetilde{\mu}, \\
\end{align*}
while the reduced cost functional reads
\begin{equation}
\label{red_cost_NS}
\tilde{J}^\ell_T(y^\ell,u) = \int_0^T y^\ell(t)^\top D^\ell_{vort} y^\ell(t) + u(t)^\top R u(t) + 2 \widetilde{\mu}^\top Y^\ell_{vort} y^\ell(t) + 2 u(t)^\top U^\ell_{vort} y^\ell(t) dt, \nonumber
\end{equation}
with
\begin{align*}
D^\ell_{vort} & = (U^\ell)^\top D_{vort} U^\ell, \\
R & = \delta I_2 + U^T D_{vort} U, \\
Y^\ell_{vort} & =  \widetilde{Y}^\top D_{vort} U^\ell \\
U^\ell_{vort} & =  U^\top D_{vort} U^\ell.
\end{align*}
Note that coefficients involving $U^{\ell}$ can be precomputed, after which the reduced ODEs system \eqref{red_semiDiscreteNS} can be both assembled and solved for each $\mu$ and $u$ with the complexity independent of the full dimension $d$.

\begin{figure}[htbp]	
\centering
	\includegraphics[width=0.49\textwidth]{decay_sing_values-eps-converted-to.pdf}
	\includegraphics[width=0.49\textwidth]{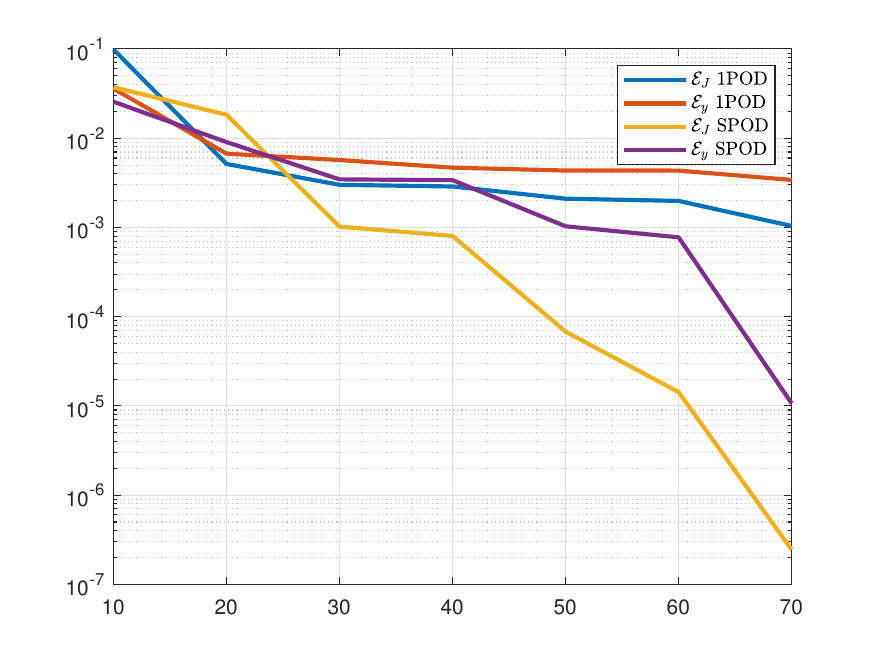}
	\caption{Left: singular values of $Y_{\underline{\mu}}$ for different numbers of offline realisations $N$. Right: model reduction errors \eqref{eq:err-model} tested on the mean inflow \eqref{mean_inflow}. The SPOD is built upon $N=15$ controlled trajectory realisations considering random iid inflows from \eqref{g_stat}, and the 1 POD is built upon the controlled trajectory with mean inflow \eqref{mean_inflow}.}
 \label{decay_NS}
\end{figure}
  
\subsection{Accuracy of the controlled reduced model}
In the left panel of Figure \ref{decay_NS} we show the decay of the singular values for 1 POD and for the SPOD using different number of realizations for the collection of the snapshots ($N \in \{5,10,15\}$). 
We note that the singular values above $10^{-3}$ are almost indistinguishable in SPOD using 10 and 15 realisations.
This shows that $N=15$ is sufficient for the statistical procedure to converge for the most relevant modes. 
We now pass to study the performances of the SPOD approach and its difference with the 1 POD technique. The statistical approach has been constructed upon $N=15$ trajectories controlled by PMP considering the random inflow \eqref{g_stat} and fixing the parameter $c=1$. The right panel of Figure \ref{decay_NS} displays the behaviour of the different error indicators for a system where the inflow is set to its mean value, $g = \overline{g}(\xi)$. It is interesting to note that although 1 POD is built exactly upon the controlled snapshots of the mean inflow, leading to a projection error of order $10^{-5}$ for $\ell =70$, the SPOD performs better for both error indicators for higher reduced dimensions, reflecting the richness of the SPOD basis for accommodating different trajectories. 

\begin{figure}[htbp]	
\centering
	\includegraphics[width=0.49\textwidth]{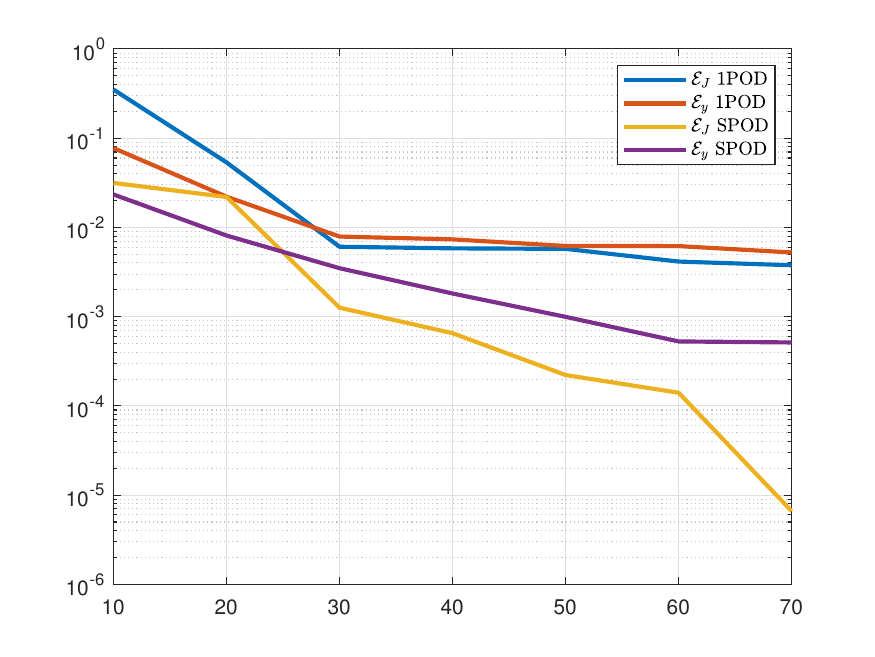}
 \includegraphics[width=0.49\textwidth]{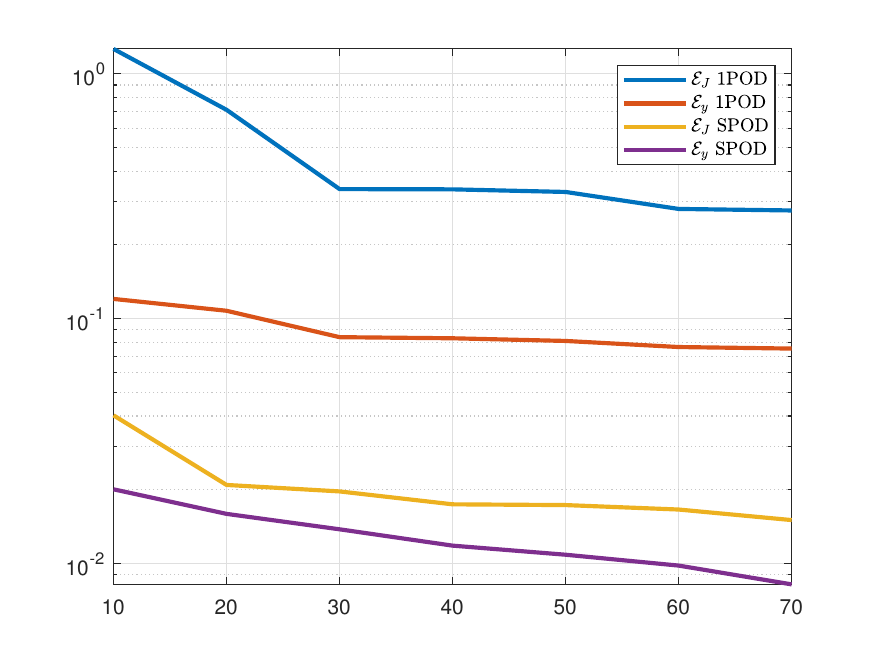}
	\caption{Average errors over 10 iid random inflows with $c=1$ (left) and $c=2$ (right) versus the reduced basis size $\ell$. The SPOD is built upon $N=15$ controlled trajectory realisations with random iid inflows from \eqref{g_stat}. The 1 POD is built upon 1 realisation of the controlled trajectory with mean inflow $\overline{g}(\xi)$.}	
  \label{err_c12_NS}
\end{figure}

In Figure \ref{err_c12_NS} we show the analysis of the mean of the errors \eqref{eq:err-model} on 10 random samples of $\mu$ changing the half-width $c$ of the interval of $\mu_k$. In the left panel we consider $c=1$, employed also for the construction of the statistical basis. The results are immediately evident: all the error indicators for the 1 POD resolutions are stuck between order $10^{-2}$ and $10^{-3}$, while SPOD presents in general a decreasing behaviour, reaching order $10^{-5}$ for the error indicator $\mathcal{E}_{J}$ with 70 reduced basis vectors. In the right panel the numerical experiments are run with parameter $c=2$, introducing  optimal trajectories possibly far from those considered during the basis construction. This is reflected in the order of the different error indicators, which achieve at most order $10^{-2}$, but still yielding a better approximation compared to the 1 POD strategy.

\subsection{Suboptimal controllers}
Lastly, we compare different faster controllers: the PMP applied to the full order model but with the mean inflow,
the SDRE applied to the reduced order model using 1 POD and SPOD bases, as well as the LQR applied to reduced models. 
Firstly, we show the uncontrolled flow at the final time in Figure~\ref{uncontrolled}
with a prefixed random inflow \eqref{g_stat} 
with 
\begin{equation}\label{eq:ns_mu_fixed}
\mu_* = \begin{bmatrix} 
\;\;\; 0.0984\\ -0.3838\\ -0.1259\\  \;\;\;  0.0398\\ -0.4314\\ \;\;\;  0.1589\\ \;\;\;  0.2323 \\ \;\;\;  0.0947
\end{bmatrix} .
\end{equation}

On the left panel we show the absolute velocity $|v|(\xi)=\sqrt{y_1(\xi)^2+y_2(\xi)^2}$, while on the right panel we show the velocity vector field $y=(y_1(\xi),y_2(\xi))$. It is possible to notice the presence of vortexes due to re-circulation issues. The total cost of the uncontrolled dynamics is equal to $4.6279$.
\begin{figure}[htbp]	
\centering
\includegraphics[width=0.49\textwidth]{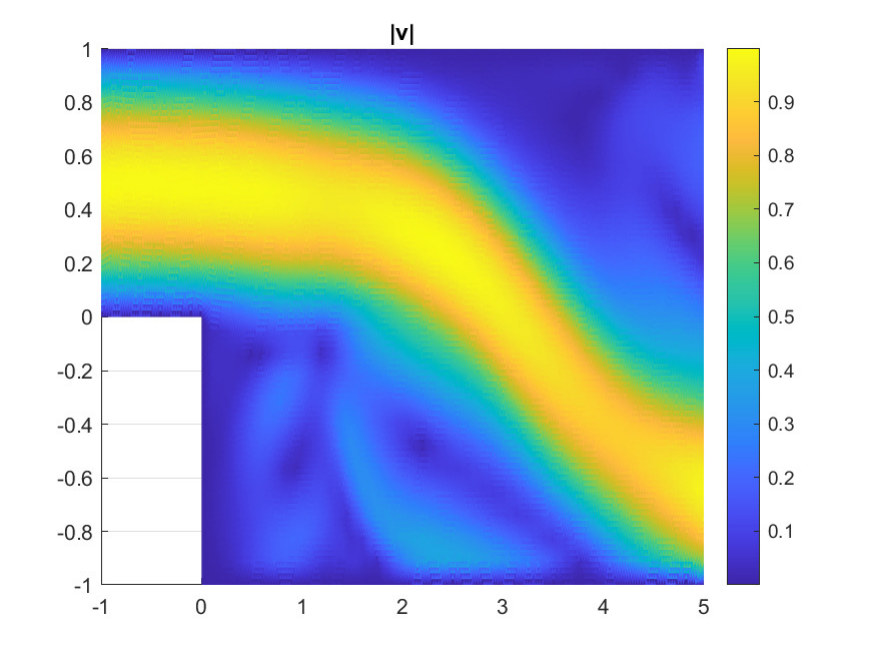}
 \includegraphics[width=0.49\textwidth]{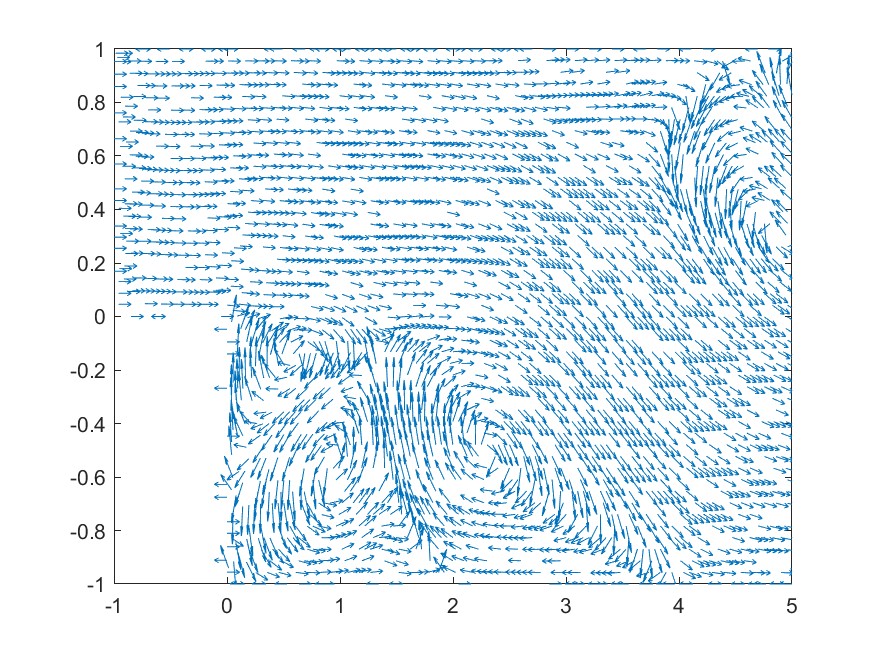}
	\caption{(Uncontrolled case) Absolute velocity (left) and velocity vector field (right) at final time $T=20$. $J_T(y,u) = 4.6279$.}
 \label{uncontrolled}
\end{figure}
Using PMP on the full system with the mean inflow $\overline{g}(\xi)$ to compute the control signal, but applying this signal to the system with a random realisation of the inflow gives the flow as shown in Figure~\ref{TT_controlled_pont_noise}.
We see that this control is unable to reduce the vortexes completely.
This indicates the need for controllers that are more specific and robust to random inputs to the model.

\begin{figure}[htbp]	
\centering
	\includegraphics[width=0.49\textwidth]{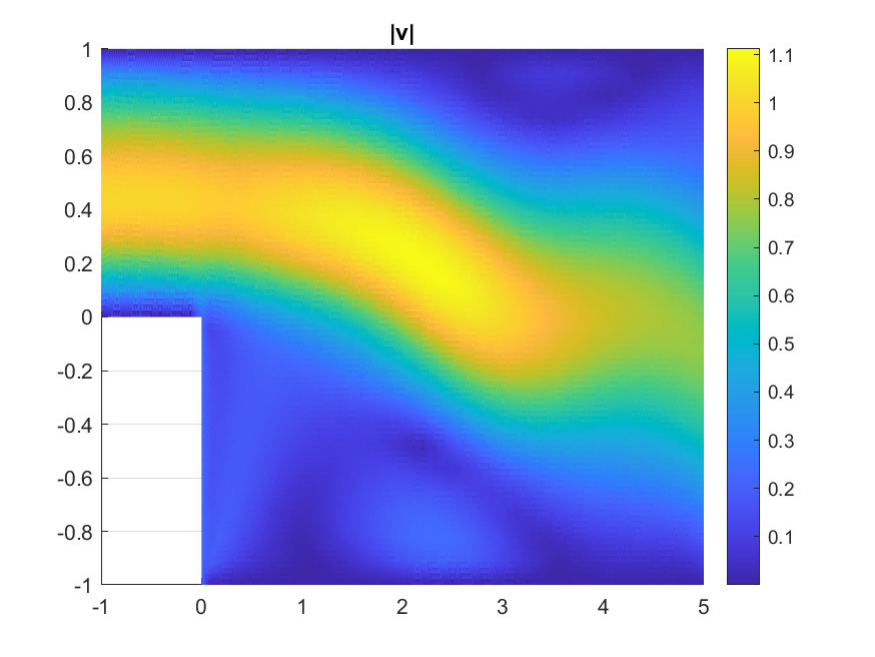}
 \includegraphics[width=0.49\textwidth]{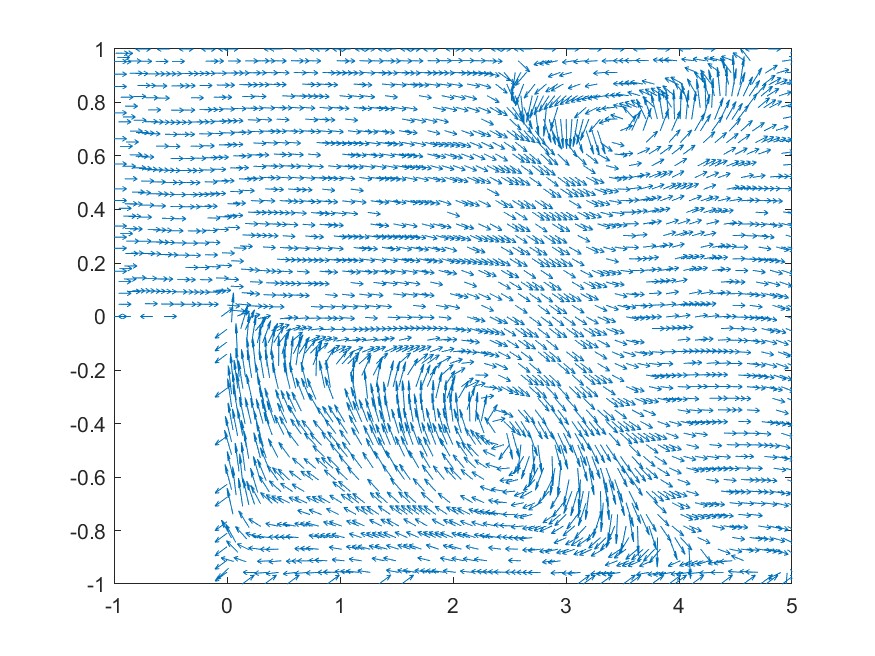}
	\caption{(Deterministic controller) Mean velocity (left) and velocity vector field (right) at final time $T=20$. The optimal control is computed via Pontryagin for the Full Order dynamics in absence of noise and plugged into a perturbed dynamical system with random inflow \eqref{g_stat}. $J_T(y,u) = 4.9015$.}	
 \label{TT_controlled_pont_noise}
\end{figure}

Now consider applying SDRE and LQR to the reduced system \eqref{red_semiDiscreteNS}. Since SDRE does not take into account quadratic terms in the control, we must omit them, and write down the reduced dynamics in a semilinear form
\begin{equation}
\dot{y}^\ell(t) = \mathcal{A}^\ell(y^\ell(t)) y^\ell(t)+ \mathcal{B}^\ell(y^\ell(t))  u (t), \quad y^\ell(0) = x^\ell,
\label{eq_NS_sdre}
\end{equation}
where
$$
\mathcal{A}(y^\ell(t)) = -(E^\ell)^{-1} (A^\ell +C^\ell(\widetilde{\mu})+F^\ell (y^\ell(t))),
$$
$$
\mathcal{B}(y^\ell(t)) =  -(E^\ell)^{-1} (F^\ell_{yu} ( y^\ell(t) \otimes I_2)+B^\ell(\widetilde{\mu})).
$$

First we compute the solution controlled via a LQR feedback, solving the Riccati equation \eqref{sdre} for $x=\underline{0}$, obtaining the matrix $P_0$.  At this point we consider the linearized feedback map
\begin{equation*}
     u(x^\ell) = -R^{-1} (\mathcal{B}(\underline{0})^\top P_0+2 U^\ell_{vort})x^\ell 
    \label{control_lqr}
\end{equation*}
and the resulting total cost is $3.3228$. The final configuration and the velocity vector field is shown in Figure \ref{lqr_NS}. We note that the resulting flow is less turbulent than the uncontrolled case, but the solution is still far from the laminar regime.

 \begin{figure}[htbp]	
\centering
\includegraphics[width=0.49\textwidth]{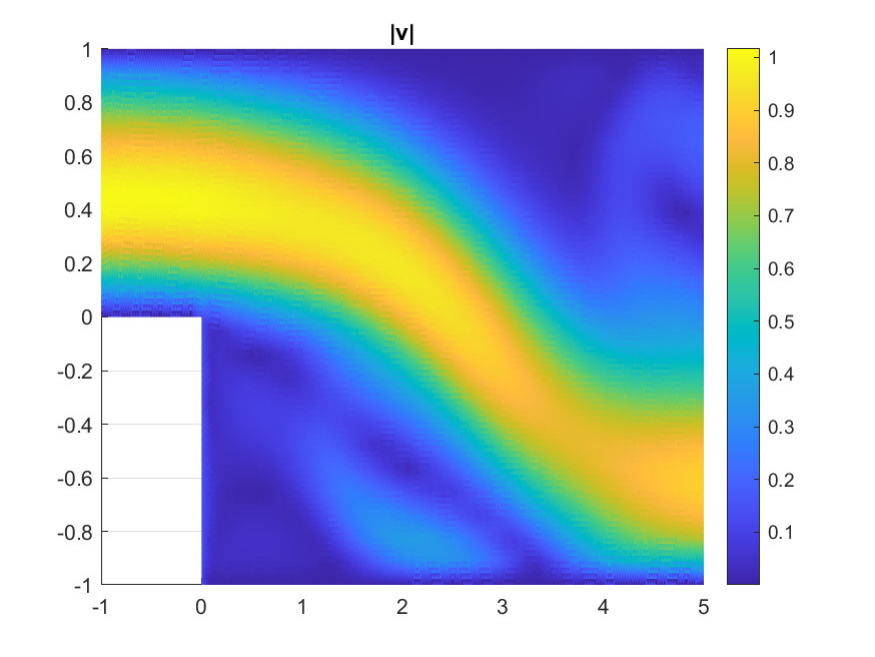}
 \includegraphics[width=0.49\textwidth]{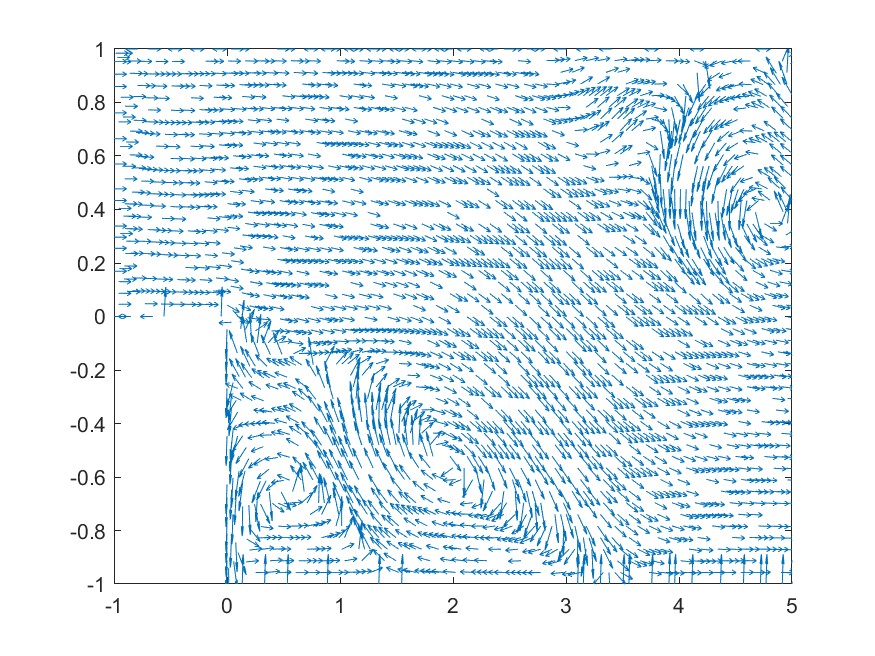}
	\caption{(LQR controller) Mean velocity (left) and velocity vector field (right) at final time $T=20$. $J_T(y,u) = 3.3228$.}	
 \label{lqr_NS}
\end{figure}

\subsection{Tensor Train approximation of the reduced SDRE control}
Now we consider the TT approximation of the feedback control function computed by SDRE on 1 POD and SPOD reduced models.
Since the TT Cross approximates scalar functions, it is applied twice, one per each component of the control \eqref{control_sdre_feed}.
Moreover, we fix $\mu=\mu_*$ as defined in \eqref{eq:ns_mu_fixed}. This allows us to approximate again a function $u(x^\ell)$ depending on the reduced state only.
In these state variables, we consider a domain $X^\ell = \bigtimes_{i=1}^\ell [a_i,b_i]$, where the interval ranges are chosen such that the domain contains all the reduced snapshots. Furthermore, we fix $tol = 10^{-3}$ and we consider $n=6$ Legendre basis functions per dimension.

Table \ref{table_tt_NS} displays the approximation error $\mathcal{E}_{TT}$ \eqref{eq:err-tt}.
The error in the statistical framework is in the order of $10^{-2}$, while the approximation error with the 1 POD basis is order $10^{-1}$, reflecting the fact that the projection onto the 1 POD basis is not able to approximate perturbed trajectories.

\begin{table}[hbht]
\centering
\begin{tabular}{c|cc}     
$\ell$  & $err_{TT}$ 1 POD & $err_{TT}$ SPOD  \\ \hline
5 & $2.49e-1$ & $3.60e-2$ \\ 
10 & $3.50e-1$ & $6.21e-3$ \\
20 & $1.19e-1$ & $1.62e-2$ \\
 \end{tabular}
  \caption{Error in the TT approximation using POD and SPOD.}
 \label{table_tt_NS}
\end{table}

Finally, in Figures \ref{TT_controlled_1POD}-\ref{TT_controlled_3} we show the final configuration of the controlled solution respectively for TT-SDRE-1POD and TT-SDRE-SPOD. For the 1 POD approach, it is possible to note by the right panel of Figure \ref{TT_controlled_1POD} that a turbulent regime is still active, while for the statistical approach (in Figure~\ref{TT_controlled_3}) the fluid presents a more laminar behaviour.

\begin{figure}[htbp]	
\centering
	\includegraphics[width=0.49\textwidth]{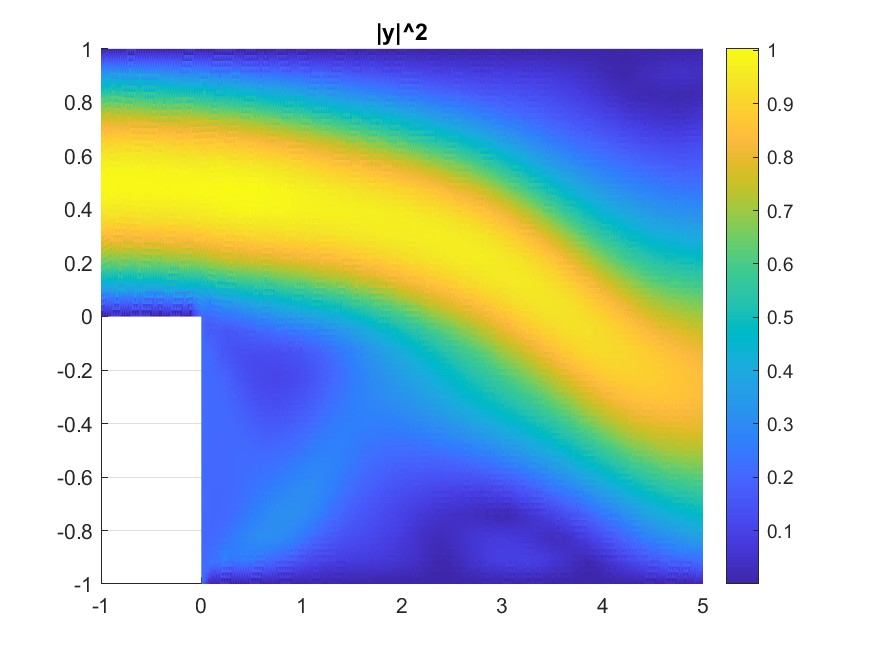}
 \includegraphics[width=0.49\textwidth]{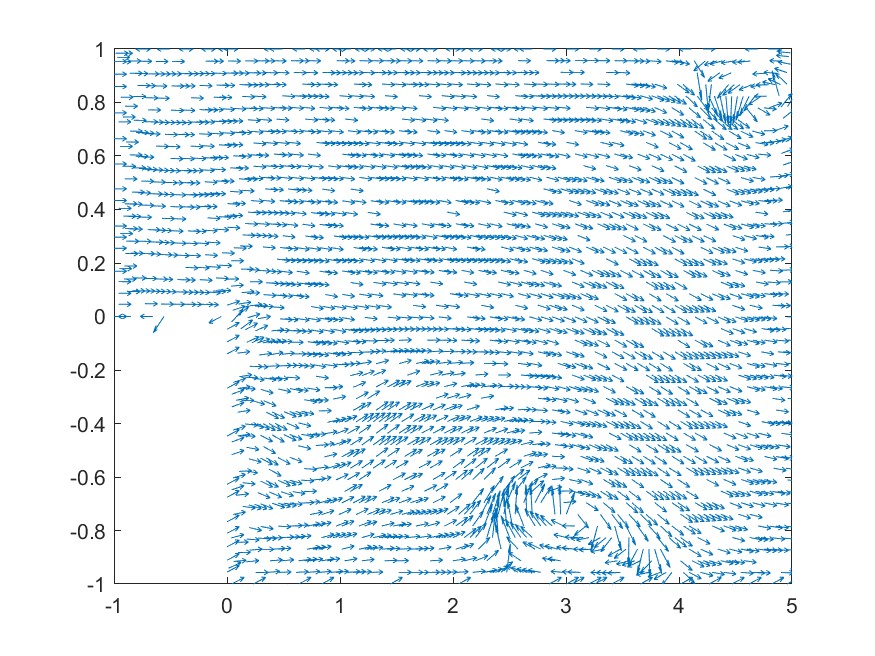}
	\caption{(1 POD controller) Mean velocity (left) and velocity vector field (right) at final time $T=20$. The optimal control is computed via Tensor Train Cross and 1 POD with $\ell =20$ basis. $J_T(y,u) = 3.0880$.}
 \label{TT_controlled_1POD}
\end{figure}

\begin{figure}[htbp]	
\centering
	\includegraphics[width=0.49\textwidth]{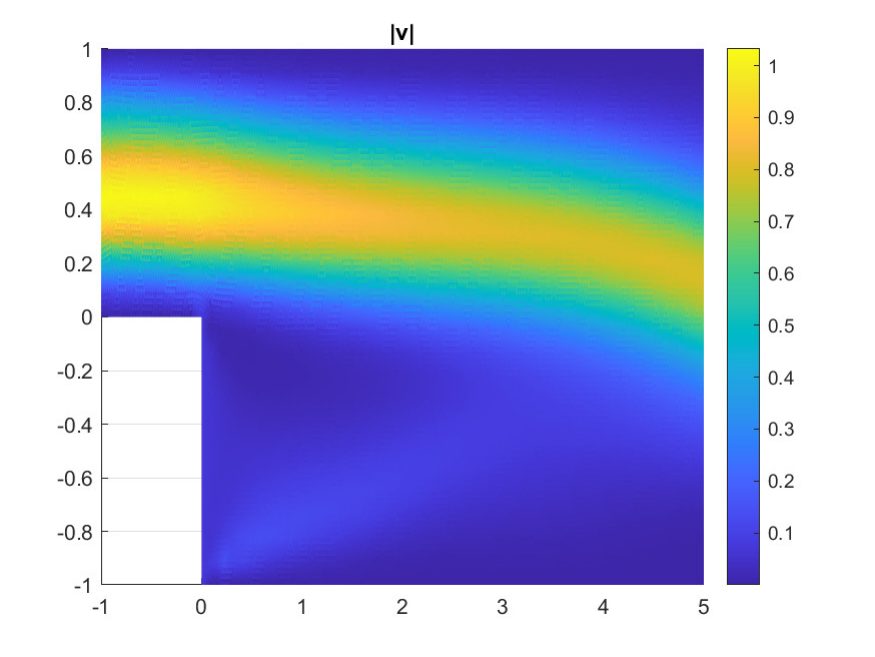}
 \includegraphics[width=0.49\textwidth]{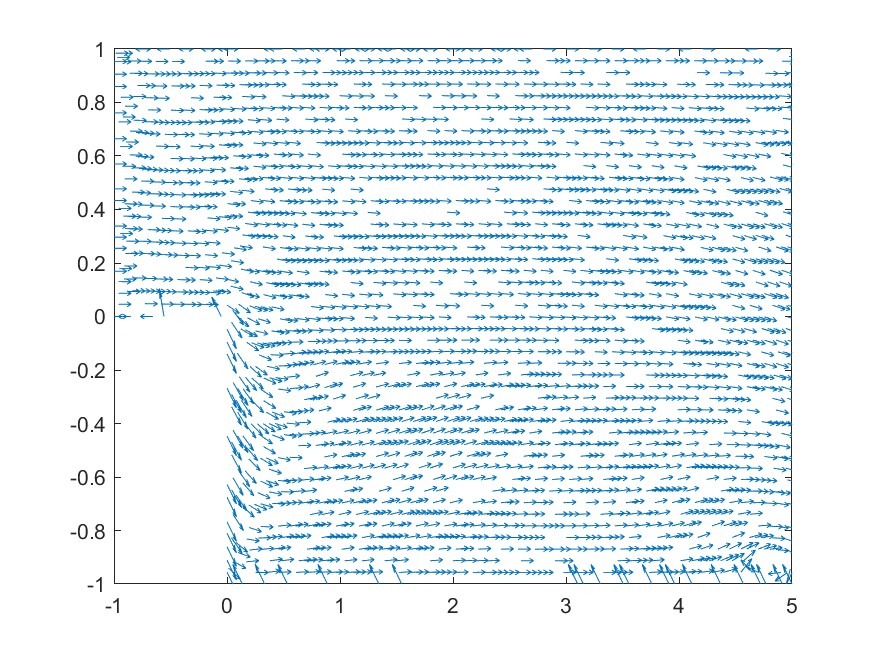}
	\caption{(SPOD controller) Mean velocity (left) and velocity vector field (right) at final time $T=20$. The optimal control is computed via Tensor Train Cross and SPOD with $\ell =20$ basis. $J_T(y,u) = 2.9527$.}	
 \label{TT_controlled_3}
\end{figure}

\section*{Concluding remarks}
We have developed a model order reduction method for synthesis of feedback control laws for nonlinear, parameter-dependent dynamics, including fluid flow problems. The reduction phase is inspired by POD techniques, requiring sampling  of the (sub)optimal control problem solutions for different realizations of the random variables. Snapshots are compressed for the construction of a statistical POD basis which minimizes the empirical risk.

The resulting reduced order model facilitates the construction of a data-driven stabilizing feedback law in the tensor train format. The low-rank tensor train structure enables the real-time implementation of a feedback control for high-dimensional problems such as vorticity minimization in 2D Navier-Stokes. 

Future research directions include the design of robust $\mathcal{H}_{\infty}$ controllers, the development of greedy sampling strategies for both parameters and initial conditions which can alleviate the computational cost of the offline phase, and the training of higher dimensional surrogates using physics-informed neural networks.

\section*{Acknowledgements}
This research was supported by the UK Engineering and Physical Sciences Research Council New Horizons Grant EP/V04771X/1, the New Investigator Award EP/T031255/1 and the Standard Grant EP/T024429/1.

\bibliographystyle{elsarticle-num}
\bibliography{references}

\end{document}